\newcommandx{\unsure}[2][1=]{\todo[linecolor=red,backgroundcolor=red!25,bordercolor=red,#1]{#2}}
\newcommandx{\change}[2][1=]{\todo[linecolor=blue,backgroundcolor=orange!25,bordercolor=blue,#1]{#2}}
\newcommandx{\info}[2][1=]{\todo[linecolor=OliveGreen,backgroundcolor=OliveGreen!25,bordercolor=OliveGreen,#1]{#2}}
\newcommand\redout{\bgroup\markoverwith{\textcolor{red}{\rule[0.5ex]{2pt}{0.8pt}}}\ULon}
\newtheorem{theorem}{Theorem}
\newtheorem{lemma}[theorem]{Lemma}
\newtheorem{remark}{Remark}[section]
\tikzset{font= }
\newcommand\nc\newcommand
\nc\bfa{{\boldsymbol a}}\nc\bfA{{\boldsymbol A}}\nc\cA{{\mathscr A}}
\nc\bfb{{\boldsymbol b}}\nc\bfB{{\boldsymbol B}}\nc\cB{{\mathscr B}}
\nc\bfc{{\boldsymbol c}}\nc\bfC{{\boldsymbol C}}\nc\cC{{\mathscr C}}
\nc\bfd{{\boldsymbol d}}\nc\bfD{{\boldsymbol D}}\nc\cD{{\mathscr D}}
\nc\bfe{{\boldsymbol e}}\nc\bfE{{\boldsymbol E}}\nc\cE{{\mathscr E}}
\nc\bff{{\boldsymbol f}}\nc\bfF{{\boldsymbol F}}\nc\cF{{\mathscr F}}
\nc\bfg{{\boldsymbol g}}\nc\bfG{{\boldsymbol G}}\nc\cG{{\mathscr G}}
\nc\bfh{{\boldsymbol h}}\nc\bfH{{\boldsymbol H}}\nc\cH{{\mathscr H}}\nc\fH{{\mathfrak H}}
\nc\bfi{{\boldsymbol i}}\nc\bfI{{\boldsymbol I}}\nc\cI{{\mathcal I}}
\nc\bfj{{\boldsymbol j}}\nc\bfJ{{\boldsymbol J}}\nc\cJ{{\mathscr J}}
\nc\bfk{{\boldsymbol k}}\nc\bfK{{\boldsymbol K}}\nc\cK{{\mathscr K}}
\nc\bfl{{\boldsymbol l}}\nc\bfL{{\boldsymbol L}}\nc\cL{{\mathscr L}}
\nc\bfm{{\boldsymbol m}}\nc\bfM{{\boldsymbol M}}\nc\cM{{\mathscr M}}
\nc\bfn{{\boldsymbol n}}\nc\bfN{{\boldsymbol N}}\nc\sN{{\mathscr N}}
\nc\bfo{{\boldsymbol o}}\nc\bfO{{\boldsymbol O}}\nc\cO{{\mathscr O}}
\nc\bfp{{\boldsymbol p}}\nc\bfP{{\boldsymbol P}}\nc\cP{{\mathscr P}}
\nc\bfq{{\boldsymbol q}}\nc\bfQ{{\boldsymbol Q}}\nc\cQ{{\mathscr Q}}
\nc\bfr{{\boldsymbol r}}\nc\bfR{{\boldsymbol R}}\nc\cR{{\mathscr R}}
\nc\bfs{{\boldsymbol s}}\nc\bfS{{\boldsymbol S}}\nc\cS{{\mathscr S}}
\nc\bft{{\boldsymbol t}}\nc\bfT{{\boldsymbol T}}\nc\cT{{\mathscr T}}
\nc\bfu{{\boldsymbol u}}\nc\bfU{{\boldsymbol U}}\nc\cU{{\mathscr U}}
\nc\bfv{{\boldsymbol v}}\nc\bfV{{\boldsymbol V}}\nc\cV{{\mathscr V}}
\nc\bfw{{\boldsymbol w}}\nc\bfW{{\boldsymbol W}}\nc\cW{{\mathscr W}}
\nc\bfx{{\boldsymbol x}}\nc\bfX{{\boldsymbol X}}\nc\cX{{\mathscr X}}
\nc\bfy{{\boldsymbol y}}\nc\bfY{{\boldsymbol Y}}\nc\cY{{\mathscr Y}}
\nc\bfz{{\boldsymbol z}}\nc\bfZ{{\boldsymbol Z}}\nc\cZ{{\mathscr Z}}
\nc\pp{\mathbb{P}}
\nc\ee{\mathbb{E} }
\renewcommand{\le}{\leqslant}
\renewcommand{\leq}{\leqslant}
\renewcommand{\ge}{\geqslant}
\renewcommand{\geq}{\geqslant}
\DeclareMathOperator{\Var}{Var}
\nc{\Cay}{{\sf Cay}}
\nc{\ff}{{\mathbb F}}
\newcommand\remove[1]{}
\title[]{Bootstrap percolation on a generalized Hamming cube}
\author[]{Fengxing Zhu}\thanks{Institute for Systems Research and Department of ECE, University of Maryland, College Park, MD 20742, USA, fengxing@terpmail.umd.edu. Supported in part by NSF grant CCF 2330909.}
\date{}
\begin{document}
\begin{abstract}
We consider the $r$-neighbor bootstrap percolation process on the graph with vertex set $V=\{0,1\}^n$ and edges connecting the pairs at Hamming distance $1,2,\dots,k$, where $k\ge 2$. We find asymptotics of the critical probability of percolation for $r=2,3$. In the deterministic setting, we obtain several results for the size of the smallest percolating set for $k\ge 2$.
\end{abstract}
\maketitle

\section{Introduction}
The process of $r$-neighbor bootstrap percolation on an undirected graph $G(V,E)$ was introduced by Chalupa, Leith, and Reich \cite{Chalupa_1979}. To initialize the process, a subset of vertices $\cA=\cA_0\subset V$ are designated as ``infected''. One step
of bootstrap percolation infects a healthy vertex if it has $\ge r$ neighbors in $\cA_0$. This can potentially augment the subset $\cA_0$
to $\cA_1$ by adding new infected vertices. The status of the infected vertices never changes back, and in the next step 
there may emerge new vertices in $\cA_1^c$ with $r$ or more neighbors in $\cA_1$. They in turn become infected, and the process
proceeds until it either cannot advance any further because of the lack of well-connected vertices or because the entire $V$
has become infected. In the latter case we say that, with the initial choice of $\cA_0$, percolation takes place and we call the set
$\cA_0$ {\em contagious}. To state this
formally, for all $i\ge 1$ define the sets
    $$
    \cA_i = \cA_{i-1} \cup \{v \in V:|N_v \cap \cA_{i-1}| \geq r \}
    $$
and say that the process stops if for some $i$, $\cA_i=\cA_{i-1}$ or $\cA_i=V$ (percolation occurs).
Here $N_v=N_v(G)=\{u\in V: uv\in E\}$ is the neighborhood of $v$ in $G$. The parameter $r$ is called the {\em infection threshold}.

Bootstrap percolation can be studied in the deterministic or random setting depending on the choice of the initializing set $\cA_0$.
In random bootstrap percolation, the set $\cA_0$ is formed by randomly and independently placing each $v\in V$ with probability $p$.
The central question in the problem of random bootstrap percolation is to determine the critical probability, defined as
   $$
   p_c(G,r)=\sup\{p \in (0,1): \mathbb{P}_p(\cA_0 \; \text{percolates on} \: G) \leq \frac{1}{2} \}.
   $$
   
Among the graphs for which $p_c$ has been extensively studied, are the Boolean hypercube $Q_n$ (the Hamming graph) and the grid graph.  The authors of \cite{balogh_bollobas_2006}
obtained a tight estimate for the critical probability $p_c(Q_n,2)$ up to a constant factor, with
a sharper estimate derived subsequently in \cite{balogh_bollobas_morris_2010}. In addition to examining the critical probability behavior when the infection threshold is constant, \cite{balogh_bollobas_morris_2009} provided a sharp estimate for $p_c(Q_n,\frac{n}{2})$ and derived the exact second-order term.

Considerable effort has also been devoted to studying the critical probability on $d$-dimensional grid graphs $[n]^d$. We mention \cite{Aizenman_1988,Cirillo,CERF200269,Holroyd2002SharpMT,Balogh2},
with the final result of \cite{Balogh} establishing a sharp estimate for $p_c([n]^d,r)$, $2 \leq r \leq d$.


In the deterministic setting one of the main problems addressed is the size and structure of the smallest ``contagious set'', i.e., the
initializing set that leads to infecting the entire $G$ by the $r$-process. Another related question is which contagious
sets take the most time to percolate, i.e., finding the maximum number of steps of the process until percolation, assuming that $\cA_0$ is contagious. Let $m(G,r)$ denote the size of a minimum contagious set and let $T(G,r)$ denote the maximum percolation time on a graph $G$ with the infection threshold $r$. For the hypercube, the authors of \cite{MORRISON201861} obtained a tight estimate of $m(Q_n,r)$. Paper \cite{Michal} established the exact value of $T(Q_n,2)$ and \cite{Ivailo} gave an estimate of $T(Q_n,r \geq 3)$. For the grid graph, \cite{dukes2023extremalboundsthreeneighbourbootstrap}
found the value of $m([a_1] \times [a_2] \times [a_3],3)$ for all but small values of 
the dimensions $a_i$. A characterization of $m([n]^2,3)$ was obtained in \cite{Fabricio,dukes2023extremalboundsthreeneighbourbootstrap}.
Additionally, \cite{Fabricio2} determined the exact maximum percolation time on $[n]^2$ with an infection threshold of $2$ for all contagious sets of minimum size, and \cite{Benevides} found the exact first order term of $T([n]^2,2)$.

While studies of bootstrap percolation have primarily focused on the hypercube and grid graphs, several works were devoted to the percolation process on other graphs. For instance, \cite{FREUND201866} found the value of $m(G,2)$ when $G$ is an Ore graph. 
The authors of \cite{amin} estimated the size of minimum contagious sets for expander graphs. Further, \cite{holroyd} modified the grid graph by allowing
edges between vertices as distance $k\ge 1$ in both horizontal and vertical directions.

Thinking of a general graph $G(V,E)$, it is possible to phrase the bootstrap percolation process as a question in coding theory. Namely, assign to each vertex a symbol from a finite alphabet, say 0 or 1, and consider a code $C(G)$ formed of such assignments, i.e., of binary vectors $c\in\{0,1\}^{|V|}$. Suppose moreover that for every vector $c\in C(G)$, the bit assignment of every vertex $v$ can be uniquely recovered from the values of $r$ of its neighbors in $G$. Having observed a vector $c$ with a subset of coordinates erased, we aim at recovering their values from the nonerased coordinates. This may require several rounds of recovery, which exactly corresponds to the bootstrap percolation process on $G$. This interpretation was first observed in \cite{barg2022high}.

Inspired by these results, in this paper we study bootstrap percolation on the hypercube with edges added between the pairs of vertices at Hamming distance $k\ge 2$. We denote this graph by $Q_{n,k}(V,E),$ where $V=\{0,1\}^n$ and $E=\{xy:1\le d_{yx}\le k\}$, where $d_{xy}:=d_H(x,y)$ is the Hamming distance and $k$ is a fixed integer. Given a subgraph $G\subset Q_{n,k}$ and $v\in V$, we denote by $N_v(G)$ the set of neighbors of $v$ in $G$, omitting the 
reference to $G$ if it is the entire cube.

\subsection{Main Results}
The first two theorems in our main results concern the critical probability $p_c(Q_{n,k},r)$ for $r=2,3$, for which we obtain a complete characterization. We now state these results formally.

\begin{theorem} \label{thm:r=2} (Critical probability for the 2-neighbor process on $Q_{n,k}$)

Let $c >0$ be a constant, $r=2$, and $k \geq 2$. As $n \rightarrow \infty$, we have 
\begin{align*}
 \mathbb{P}(\cA_0 \; \text{percolates}) \rightarrow
    \begin{cases}
  1& \text{if $p \gg 2^{-\frac{n}{2}} n^{-k}$} \\
  0 & \text{if $p \ll 2^{-\frac{n}{2}} n^{-k}$} \\
   1-\exp \left(-\frac{c^2}{2(2k)!} \right)& \text{if $p=c 2^{-\frac{n}{2}} n^{-k}$}.
\end{cases}
\end{align*}
Furthermore, if $p=c2^{-\frac{n}{2}} n^{-k}$, then the number of unordered pairs $(x,y)$ such that $d_{xy}\leq 2k$ is asymptotically Poisson distributed with mean $\frac{c^2}{2(2k)!}$.
\end{theorem}

\begin{theorem} \label{thm: r=3} (Critical probability for the 3-neighbor process on $Q_{n,k}$)

Let $c >0$ be a constant, $r=3$, and $k \geq 2$. As $n \rightarrow \infty$, we have 
\begin{align*}
 \mathbb{P}(\cA_0 \; \text{percolates}) \rightarrow
    \begin{cases}
  1& \text{if $p \gg 2^{-\frac{n}{3}} n^{-k}$} \\
  0 & \text{if $p \ll 2^{-\frac{n}{3}} n^{-k}$} \\
   1-\exp\left(-\frac{c^3}{6\binom{2k}{k}(2k)! k!} \right)& \text{if $p=c 2^{-\frac{n}{3}} n^{-k}$}.
\end{cases}
\end{align*}
Furthermore, if $p=c2^{-\frac{n}{3}} n^{-k}$, then the number of unordered triples $(x,y,z)$ such that $d_{xy}\leq 2k$, $d_{xz} \leq 2k$ and $d_{yz} \leq 2k$ is asymptotically Poisson distributed with mean $\frac{c^3}{6\binom{2k}{k}(2k)! k!}$.
\end{theorem}

First, observe that the above two theorems imply that the critical probabilities for the $2$-neighbor and $3$-neighbor processes on $Q_{n,k}$ do not exhibit sharp thresholds. This stands in contrast to the $2$-neighbor process on $Q_n$, for which a sharp threshold\footnote{An event A has a sharp threshold if the window (in $p$) in which A has a probability between $\epsilon$ and $1-\epsilon$ has size $o(p_c)$; otherwise it has a coarse threshold.} was established in \cite{balogh_bollobas_morris_2010}.

The characterizations of the critical probability for $r=2,3$ appear to be analogous to the threshold functions for the appearance of a fixed strictly balanced subgraph in $G(n,p)$, the random graph on $n$ vertices in which each pair of vertices is joined by an edge independently with probability $p$. The main difference is that, in our setting, one must first obtain a characterization of the percolating sets and then apply standard probabilistic tools such as the second moment method, the FKG inequality, the exponential tail bounds of Janson, {\L}uczak, and Ruci{\'n}ski, and $k$th moment calculations to prove convergence to a Poisson distribution.

In the case $k=2$, it is easy to show that if the initially infected set contains a pair $(x,y)$ such that $d_{xy} \leq 2k$ then the every vertex on the graph will eventually be infected. With this characterization of percolating sets, the expectation value of the number of such unordered pairs will give rise to the critical probability via the second moment method. However, in the case $k=3$, establishing an analogous statement is more involved. 

Now we will state the two theorems in our main results concerning the size of minimum percolating sets. 
\begin{theorem}\label{thm:max-set} The size of the smallest percolating set $m(r)$ satisfies
\begin{align}
    {m(r)}& =r       \text{\quad \quad if $k=2$ and $ 2 \leq r \leq 6$} \label{eq:2-6}\\
    m(r) & \leq \binom{r}{m_r}     \text{\quad if $k=2$ and $ 10 \leq r \leq n$} \label{eq:8-n}\\
    m(r) & \leq \binom{n+1}{m_r}  \text{\quad if $k=2$ and $n \leq r \leq \frac{n^2}{8}$} \label{eq:>n} \\
       m(r) & \leq \sum_{j=0}^{k-1} \binom{r-1}{m_{k,r}+j}  \text{\quad \quad if $k\geq 3$, $r=cn$ with $0<c<1$, and $r \geq 2^{\frac{k^2}{k-1}}$}   \label{eq: r=cn}   \\
    m(r) & \leq \sum_{j=0}^{k-1} \binom{n}{m_{k,r}+j} \text{\quad \quad if $k\geq 3$ and $ n \leq r \leq \frac{n^k}{(2k)^k}$}  \label{eq: r less} \\
    m(r) & >r \text{\quad \quad \quad if $k=2$ and $r \geq 7$} \label{eq:>15}\\
    m(r) & =r \text{\quad \quad \quad if $k\geq 3$ and $r=o(n)$,} \label{eq:>3}
  \end{align}
where $m_r:=\lfloor\sqrt {2r}\rfloor$ and $m_{k,r}=\lfloor kr^{1/k}\rfloor$. 
\end{theorem}

\begin{theorem} \label{thm: min-set}
    Let $n$ be sufficiently large, and let $0 <\delta <1$, $0<\ell<1$, and $0 <c<1$. We have
\begin{align}
    {m(r)}& \geq \delta 2^{\sqrt{2cr^{\ell}}}    \text{\quad \quad if $k=2$ and $   r \geq n^{\frac{2}{2-\ell}}\left(\frac{\sqrt{2c}}{2(1-c)} \right)^{\frac{2}{2-\ell}}$} \label{eq:k=2}  \\
    {m(r)} & \geq \delta 2^{\sqrt{2cr}} \text{\quad \quad if $k=2$  and $  r \geq \frac{n^2}{\left(\sqrt{2c}+\frac{(1-c)}{\sqrt{2c}}\right)^2}$} \label{eq: k=2,l=1}\\
    m(r) & \geq \delta \exp \left(r^{\ell/k}\frac{k}{e}(\frac{c}{k})^{1/k}\right)  \text{\quad \quad if $k\geq 3$ and $r \geq n^{\frac{k(k-1)}{k-\ell}} \left(\frac{e}{k-1}\right)^{\frac{(k-1)k}{k-\ell}}\left(\frac{c}{2}\right)^{\frac{k}{k(k-\ell)}\frac{1}{1-c}}$}   \label{eq: k>=3,l<1}  \\ 
    {m(r)} & \geq  \delta \exp \left({r^{1/k} \frac{k}{e} \left(\frac{c}{2}\right)^{1/k}} \right)  \text{\quad \quad if $k \geq 3$ and $r \geq n^k \left(\frac{2}{1-c}\left(\frac{e}{k-1} \right)^{k-1} \left(\frac{c}{2} \right)^{1/k}\frac{k}{e}\right)^{\frac{k}{k-1}}$.} \label{eq: k>=3}
\end{align}

\end{theorem}

For the upper bound on the minimum size of a percolating set, it suffices to construct a small set of vertices whose infection eventually leads to full infection. Establishing a lower bound, however, is much more elusive for general $r$. In this setting, the linear-algebraic method introduced by Balogh, Bollob{\'a}s, Morris, and Riordan \cite{Balough2012} is essentially the only general tool currently available. However, we encountered some difficulty in applying this method to our graph, and therefore resorted to a more ad hoc argument to obtain a meaningful lower bound.

The organization of the paper is as follows. In Section~\ref{sec:pre}, we present and prove several preliminary results and tools. Theorems~\ref{thm:r=2} and~\ref{thm: r=3} are proved in Sections~\ref{sec:p2} and~\ref{sec:p3}, respectively. In Section~\ref{sec:MPS}, we prove Theorems~\ref{thm:max-set} and~\ref{thm: min-set}.

\section{Preliminary Results and Tools} \label{sec:pre}
Let us begin with two simple lemmas used repeatedly in the paper.

\begin{lemma} \label{lemma:subcube}
Let $r,k\ge 2, n \geq r-1$ and let $\cA_0\subset Q_{n,k}$ be a set of already infected vertices. If $\cA_0$ contains an $(r-1)$-dimensional subcube of $Q_{n,k}$, then the $r$-process on $Q_{n,k}$ will percolate after several additional steps. 
\end{lemma}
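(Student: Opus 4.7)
The natural approach is induction on $n$, peeling off one fixed coordinate of the subcube at a time. The base case $n=r-1$ is trivial: the $(r-1)$-dimensional subcube already equals $V$, so $\cA_0=V$ and there is nothing to infect.

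For the inductive step assume $n\ge r$, so $S\subseteq \cA_0$ has at least one fixed coordinate. After relabeling, I can assume that $x_n$ is fixed and equals, say, $0$, so $S$ lies entirely in the hyperplane $H_0=\{v\in V:v_n=0\}$. The key observation is that the subgraph of $Q_{n,k}$ induced on $H_0$ is isomorphic to $Q_{n-1,k}$, because two vertices with $v_n=v'_n=0$ have the same Hamming distance as their projections onto the first $n-1$ coordinates. Inside $H_0$ the set $S$ is still an $(r-1)$-dimensional subcube, and $n-1\ge r-1$, so the induction hypothesis applies: the $r$-process on $Q_{n-1,k}$ started from $S$ infects all of $H_0$. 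Since any sequence of infections that is valid in $Q_{n-1,k}$ is also valid in $Q_{n,k}$ (the extra neighbors of each vertex that lie in $H_1=\{v:v_n=1\}$ are simply uninfected and contribute nothing to the threshold count), the same sequence infects all of $H_0$ inside the ambient graph $Q_{n,k}$.

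Once $H_0$ is fully infected, every $w\in H_1$ has at least $r$ infected neighbors, so $H_1$ is swept up in one additional round. Indeed, because $k\ge 2$, among the neighbors of $w$ in $H_0$ are the unique vertex obtained by flipping only $x_n$ (distance $1$) and the $n-1$ vertices obtained by flipping $x_n$ together with exactly one other coordinate (distance $2$). Thus $w$ has at least $1+(n-1)=n\ge r$ infected neighbors and becomes infected, completing the induction.

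The only point that requires a little care, and is really the heart of the argument, is the translation between the ambient percolation and the percolation on the hyperplane: one needs to note that the edges of $Q_{n,k}$ restricted to $H_0$ are exactly those of $Q_{n-1,k}$, and that ignoring the (uninfected) $H_1$-neighbors when checking the threshold inside $H_0$ can only underestimate progress. Everything else is a short counting check that uses $k\ge 2$ and $n\ge r$.
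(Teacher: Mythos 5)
Your proof is correct and is essentially the paper's argument viewed in the opposite direction: unrolling your downward recursion on $n$ reproduces the paper's upward induction that grows the infected subcube from dimension $r-1$ to $n$, and the key counting step (a vertex in the uninfected half has the distance-$1$ neighbor plus $n-1$ distance-$2$ neighbors in the infected half, so at least $n\ge r$ infected neighbors since $k\ge 2$) is identical. Your explicit remarks on the isomorphism of the induced subgraph on $H_0$ with $Q_{n-1,k}$ and on monotonicity are fine and merely make precise what the paper leaves implicit.
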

\begin{proof}   
W.l.o.g., consider the subcube\footnote{by abuse of notation, we may write $Q_{r-1,k}$ to refer to an $(r-1)$-dimensional subcube of $Q_{n,k}$, where the locations of $\ast$ are not necessarily consecutive.} $Q_{r-1,k}^{0}=\{x=(\ast^{r-1}0^{n-r+1})\}$ formed of all the vertices that end with $n-r+1$ 0's and assume that all of its vertices are infected. 
We will show by induction that all the subcubes $Q_{i,k}^{0}, i=r,\dots, n$ become infected,
implying the percolation claim. For the base, consider the vertex $v=(0^{r-1}10^{n-r})$ and observe that $|N_v(Q_{r-1,k}^{0})|\ge r$ for all $k\ge 2$: the origin $0^{n}$ and all the vertices
of the form $x=(z|0^{n-r+1})$ with $|z|=1$ satisfy $d(v,x)\le 2$, so the infection spreads to $v$.
Similarly, all the vertices of the form $x=(\ast^{r-1}|1|0^{n-r})$ have $\ge r$ neighbors in the set
$Q_{r-1,k}^{0}$, so the entire set $Q_{r,k}^0$ gets infected, completing the base step.
The transition from $Q_{i-1,k}^0$ to $Q_{i,k}^0,r+1\le i\le n$ is argued in the same way (in fact, it becomes easier to propagate the infection as the dimension grows because the set of
close neighbors in $Q_{i-1,k}^0$ grows in size as $i$ increases).
\end{proof}

Because of this observation, to prove that percolation takes place, it suffices to show that with the appropriate
choice of the initially infected set $\cA_0\subset V$ several steps of the process will infect a subcube $Q_{r-1,k}$.
We will use variations of this approach in the proofs of the main results.

\begin{lemma} \label{lemma15} Let $X_n, n\ge 1$ be a collection of random variables such that $X_n \in {\mathbb N}_0$, 
$EX_n\to\infty$ and $\Var(X_n)=o(\ee X_n^2)$ as $n\to\infty$. Then $\pp (X_n=0)=o(1)$ and 
   $$
   \pp\Big(\frac{\ee X_n}{2} \leq X_n \leq \frac{3}{2}\ee X_n\Big)=1-o(1).
   $$
\end{lemma}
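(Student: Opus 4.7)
The plan is a direct application of the second moment method via Chebyshev's inequality. The only subtle point is the form in which the variance hypothesis is stated, so I would begin by converting it into the shape that Chebyshev wants.

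First I would observe that $\Var(X_n)=o(\ee X_n^2)$ is equivalent to $\Var(X_n)=o((\ee X_n)^2)$. Indeed, since $\ee X_n^2 = \Var(X_n)+(\ee X_n)^2$, if $\Var(X_n)/(\ee X_n)^2$ did not tend to $0$, there would be a subsequence and a constant $c>0$ on which $\Var(X_n)\geq c(\ee X_n)^2$, whence $\ee X_n^2\leq (1+1/c)\Var(X_n)$, contradicting the hypothesis. The converse direction is immediate from $\ee X_n^2\geq (\ee X_n)^2$. Combined with $\ee X_n\to\infty$, this also ensures $(\ee X_n)^2>0$ for large $n$, so the ratios make sense.

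Next, for the first claim I would apply Chebyshev's inequality with deviation $\ee X_n$. Since $X_n\in\mathbb{N}_0$, the event $\{X_n=0\}$ is contained in $\{|X_n-\ee X_n|\geq \ee X_n\}$, hence
$$
\pp(X_n=0)\leq \pp(|X_n-\ee X_n|\geq \ee X_n)\leq \frac{\Var(X_n)}{(\ee X_n)^2}=o(1),
$$
by the reformulated hypothesis. For the concentration statement I would apply Chebyshev with deviation $\ee X_n/2$:
$$
\pp\Big(|X_n-\ee X_n|\geq \tfrac{\ee X_n}{2}\Big)\leq \frac{4\Var(X_n)}{(\ee X_n)^2}=o(1),
$$
and the complementary event is exactly $\{\ee X_n/2\leq X_n\leq 3\ee X_n/2\}$, which therefore holds with probability $1-o(1)$.

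There is no real obstacle: the only point worth stating carefully is the equivalence of the two normalizations of the variance hypothesis, after which both conclusions follow from a single invocation of Chebyshev with appropriate deviations.
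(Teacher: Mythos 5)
Your proof is correct and is exactly the argument the paper intends: the first claim is the standard second moment method (Chebyshev with deviation $\ee X_n$) and the second is Chebyshev with deviation $\ee X_n/2$; your preliminary reconciliation of the two normalizations of the variance hypothesis is a reasonable piece of bookkeeping that the paper leaves implicit.
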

\begin{proof} The first statement follows by an application of the second moment method and the second one 
by Chebyshev's inequality.
\end{proof}

Another tool that we need is the FKG-inequality. Before stating the FKG-inequality we need to introduce some concepts and notation. 

Suppose that $\Omega = \{0, 1\}^{2^n}$, and each coordinate is assigned independently. In other words, we
have a collection of independent Bernoulli random variables $X_1, . . . , X_{2^n}$.

Define a partial order on the elements of $\Omega$ as follows: 
$$(x_1, . . . , x_{2^n}) \geq (y_1, . . . , y_{2^n})$$ 
if and only if $x_i \geq y_i$ for all $1 \leq i \leq 2^n$.

We say that an event $A \subset \Omega$ is increasing(decreasing) if $x \in A$ and $y \geq x(y \leq x)$ implies that $y \in  A$. 

There is a one-to-one correspondence between the initial infection configuration and $\Omega$. Let us enumerate all the vertices in $V(Q_n)$ as $v_1,v_2,\cdots,v_{2^n}$ and denote $x_i=1$ if $v_i$ is initially infected and otherwise $x_i=0$.

\begin{lemma} (FKG inequality \cite{FKG}) \label{lemma: FKG}
In the above setting, if both events $A$ and $B$ are increasing (or decreasing), then 
$$\mathbb{P}(A \cap B) \geq \mathbb{P}(A) \mathbb{P}(B).$$
\end{lemma}

 Another tool required to derive our main theorems is the following inequality by Janson, {\L}uczak, and Ruci{\'n}ski. 

\begin{lemma} (Janson,{\L}uczak, and Ruci{\'n}ski \cite{Random_graphs}) \label{lemma: Janson}
Let $A_1,\cdots, A_m \subset [n]$. Let $R \subset [n]$ be chosen randomly such that $$\mathbb{P}(i \in R)=p$$ 
for each $i \in [n]$ independent of $ j \neq i \in [n]$. Define events $\{B_j\}_{j=1}^m$ to be $$B_j=\{A_j \subset R\}.$$ 
Let $\mu= \mathbb{E}[\sum_{j=1}^m \mathbb{I}_{B_j}]$ and $\Delta=\sum_{i \sim j} \mathbb{P}(B_i \cap B_j)$ where $i \sim j$ if $A_i \cap A_j \neq \emptyset$ for $i \neq j$. Then we have 

\begin{align*}
 \mathbb{P}(\cap_{j=1}^m B_j^c) \leq 
    \begin{cases}
  \exp(-\mu +\Delta)& \text{if $\Delta \leq \mu$} \\
   \exp(-\frac{\mu^2}{\mu+2\Delta}) & \text{if $\Delta \geq \mu$.} \\
\end{cases}
\end{align*}
\end{lemma}
Note that $\Delta$ excludes the diagonal terms with $i=j$. 

The main theorem for proving convergence to the Poisson distribution is the following. 

\begin{theorem} (Frieze,Kro{\'n}ski \cite{Random_graphs_Frieze2}) \label{thm:random graphs}
Let $S_n=\sum_{i\ge 1}^n I_i$, be a sequence of random variables $n \geq 1$, and define the binomial moments $B_k^{(n)}=\mathbb{E}\binom{S_n}{k}$. Suppose that there exists $\lambda\ge 0$ such that, for every fixed $k\ge 1$,
\[
\lim_{n\to\infty} B_k^{(n)}=\frac{\lambda^k}{k!}.
\]
Then, for every $j\ge 0$,
\[
\lim_{n\to\infty}\mathbb{P}(S_n=j)=e^{-\lambda}\frac{\lambda^j}{j!},
\]
i.e., $S_n$ converges in distribution to a $\mathrm{Poisson}(\lambda)$ random variable.

\end{theorem}

\begin{remark} (Frieze, Kro{\'n}ski \cite{Random_graphs_Frieze2}) \label{rmk:random graphs}
   Notice that the falling factorial
\[
(S_n)_k = S_n(S_n - 1)\cdots(S_n - k + 1)
\]
counts the number of ordered $k$-tuples of events with $I_i = 1$. Hence the binomial moments of $S_n$ can be replaced in Theorem~\ref{thm:random graphs} by the factorial moments, defined as
\[
\mathbb{E}(S_n)_k
=
\mathbb{E}\big[ S_n(S_n - 1)\cdots(S_n - k + 1) \big],
\]
and one has to check whether, for every $k \ge 1$,
\[
\lim_{n\to\infty} \mathbb{E}(S_n)_k = \lambda^k.
\] 
\end{remark}

\section{Critical probability for the 2-neighbor process} \label{sec:p2}
In this section we will prove Theorem ~\ref{thm:r=2}.

Below by $\omega(n)$ and $C(k)$ we denote an arbitrarily slowly growing function of $n$ and a constant which may depend on $k$ respectively. We will use $\omega(n)$ and $C(k)$ as generic notations, where the specific form of these functions may be different in different expressions. 

Let us prove the following Lemma.
\begin{lemma} \label{lemma:r=2}
As $n \rightarrow \infty$, we have 
    \begin{equation}\label{eq:pc22}
    \frac{1}{\omega(n)}2^{-\frac{n}{2}} n^{-k} \leq p_c(Q_{n,k},2) \leq  \omega(n)2^{-\frac{n}{2}} n^{-k}.
    \end{equation}
\end{lemma}


\begin{proof}
We assume that at the start of the process, every vertex is infected independently of the others with probability $p$ and is not 
infected with probability $1-p$. Let $\cA_0$ be the set of initially infected vertices. We claim that percolation occurs if and only if there is a pair $x,y\in \cA_0$ with $d_{xy}\le 2k$. If not, then no vertex will have 2 neighbors in $Q_{n,k}$,
so the process cannot evolve. Now suppose the assumption holds true. We will prove that the process percolates,
differentiating between the following two cases.

Suppose that $d_{xy}=t,$ where $1\le t\le k$. Let us write $x=(x_1,\dots,x_t,x_{t+1},\dots x_n)$ and 
$y=(\bar{x}_1 \bar{x}_2\dots \bar{x}_t x_{t+1}\dots x_n)$ where $\bar{x}$ refers to the negation of the bit $x$.
Making one step of the process ensures that all the vertices in the subcube $(**\dots *|x_{t+1}x_{t+2}\dots x_n)$ are infected,
and the claim follows by Lemma~\ref{lemma:subcube}.

Now suppose that $k+1 \leq t \leq 2k$. Writing $x$ and $y$ as above, consider the vertex
$u=(x_1x_2\dots x_k\bar{x}_{k+1}\bar{x}_{k+2}\dots \bar{x}_tx_{t+1}\dots x_n)$. We have $d_{ux}=t-k\le k$ and $d_{uy}=k,$
so after one step, $u$ will be infected. Considering the pair $(u,x),$ we have reduced the percolation task to the previous case, so our claim is proved.

Now let $X_i,i=1,2,\dots, 2k$ denote the number of initially infected pairs $(x,y)$ with $d_{xy}=i$. Clearly, 
  $$
  \ee \sum_{i=1}^{2k}X_i=\sum_{i=1}^{2k}2^{n-1}\binom{n}{i}p^2. 
  $$
For a pair $T=(x,y)\in V\times V$, let $X_{T}={\mathbb I}_{\{T\text{ initially infected}\}}$.
Let $\cT_i:=\{(x,y)\in V\times V: d_{xy}=i\}$.
     \begin{equation}\label{eq:var}
     \Var\Big(\sum_{i=1}^{2k}X_i\Big)=\sum_{i<j}\sum_{T\in \cT_i,T'\in \cT_j}\text{Cov}(X_T,X_{T'})
     +\sum_{i=1}^{2k}\sum_{T ,T'\in \cT_i}\text{Cov}(X_T,X_{T'}).
     \end{equation}
     Let $T \in \cT_i$ and $T' \in \cT_j$ with $i<j$. Then we have 
        \begin{align*}
\text{Cov}(X_T,X_{T'}) &=\ee [X_TX_{T'}]-\ee [X_T] \ee  [X_{T'}]\\
&=p^{|T \cup T'|}-p^{|T|+|T'|}\\
&=
\begin{cases}
  0& \text{if $|T \cap T'|=0$} \\
  p^3-p^4 & \text{if $|T \cap T'|=1$}.
\end{cases}.
     \end{align*}
Summing over all choices of the pairs $T_i,T'_i$, we obtain
\begin{align}
\sum_{T \in T_i.T' \in T_j} \text{Cov}(T,T')= \binom{n}{i} \binom{n}{j} (p^3-p^4)2^n. \label{eq:ij}
\end{align}
Now let $T,T' \in \cT_i$. We have
    \begin{align*}
\text{Cov}(X_T,X_{T'})& =\ee [X_TX_{T'}]-\ee [X_T] \ee  [X_{T'}]\\
&=p^{|T \cup T'|}-p^{|T|+|T'|}\\
&=
\begin{cases}
  p^2-p^4& \text{if $T=T'$} \\
  p^3-p^4 & \text{if $|T \cap T'|=1$} \\
  0 & \text{if $|T \cap T'|=0$}.
\end{cases}
\end{align*}
Summing over all choices of the pairs $T_i,T'_i$, we obtain
\begin{align}
\sum_{T, T' \in \cT_i} \text{Cov}(T,T')=(p^2-p^4)2^n \binom{n}{i}+
(p^3-p^4)2^n \binom{n}{i}\Big(\sum_{l=0}^{i-1}\binom{i}{l} \binom{n-i}{i-l} \Big). \label{eq:ii}
\end{align}

Let $X=\sum_{i=1}^{2k}X_i$ (the dependence on $n$ is suppressed). Therefore,
   $$
   \ee X =C(k)2^nn^{2k}p^2.
   $$
To fit the assumptions of Lemma~\ref{lemma15}, we will choose $p$ so that $\Var(X)=o(\ee X^2)$. Write the variance using \eqref{eq:ij} and \eqref{eq:ii} in \eqref{eq:var} to observe that
  $$
  \text{Var} \left(\sum_{i=1}^{2k}X_i \right) \leq C(k)(p^22^nn^{2k}+p^32^nn^{4k}).
  $$
It suffices to take 
 $$
 (p^32^nn^{4k})^{\frac{1}{2}}\ll 2^nn^{2k}p^2,
 $$ 
or equivalently
   $$p \gg 2^{-n},$$
and 
   $$
   (p^22^nn^{2k})^{\frac{1}{2}}\ll 2^nn^{2k}p^2,
   $$ 
or equivalently 
   $$ 
   p \gg 2^{-\frac{n}{2}}n^{-k}.
   $$
The second of these conditions is more restrictive, giving the lower bound in the claim \eqref{eq:pc22} of the theorem. 

At the same time,
by Markov's inequality, if $p \ll 2^{-\frac{n}{2}}n^{-k}$, then $\ee X\to 0$ if $n\to\infty$, and thus, percolation
does not occur with probability approaching 1. This proves the upper bound in \eqref{eq:pc22}.
\end{proof}

The following lemma shows that if $p=c2^{-\frac{n}{2}}n^{-k}$, then the probability of full infection converges to a constant. This is analogous to the behavior of threshold functions for the appearance of a strictly balanced subgraph in $G(n,p)$. 
\begin{lemma}
Let $c >0$ be a constant and $p=c 2^{-\frac{n}{2}} n^{-k}$. As $n \rightarrow \infty$, we have 
\begin{align*}
 \mathbb{P}(\cA_0 \; \text{percolates}) \rightarrow 1- \exp \left(-\frac{c^2}{2(2k)!} \right). 
\end{align*}
\end{lemma}
\begin{proof}
Define  
\begin{equation*}
  \mathbb{I}_{i}=  
  \begin{cases}
   1 & \text{if $i$th pair $(x,y)$ with $d_{xy}\leq 2k$ is initally infected} \\
  0 & \text{otherwise}.
\end{cases}.
\end{equation*}
Define $N=\sum_{i=1}^{2k}2^{n-1}\binom{n}{i}$. Then we have 
    \begin{align}
    \mathbb{P}(\cA_0 \; \text{does not percolates})
    & = \mathbb{P}(\sum_{i=1}^{N}\mathbb{I}_{i}=0)\\
    & = \mathbb{P}(\cap_i^{N} \{\mathbb{I}_{i}=0\})\\
    & \geq \prod_{i=1}^{N}\mathbb{P}(\{\mathbb{I}_i=0\}). \label{eq1: use FKG} \\
    & =(1-p^2)^N\\
    & \rightarrow \exp \left(-\frac{c^2}{2(2k)!}\right)
    \end{align}
Note that the inequality~\ref{eq1: use FKG} is true due to Lemma~\ref{lemma: FKG}. 

Now let us prove the upper bound by using Lemma~\ref{lemma: Janson}.

Let $R$ be the set of initially infected vertices and $\{A_i\}_{i=1}^N$ be all the distinct unordered pairs $(x,y)$ with $d_{xy}\leq 2k$. Let $B_i$ be the event that $A_i \subset R$. 

From the proof of Lemma ~\ref{lemma:r=2}, we have  $$\mu:=\mathbb{E}(\sum_{i=1}^N \mathbb{I}_{B_i})=Np^2$$ and  

$$\Delta:=\sum_{i \sim j}\mathbb{P}(B_i \cap B_j) \leq p^32^nn^{4k}=o(1),$$
where $i \sim j$ if $A_i \cap A_j \neq \emptyset$ for $i \neq j$. 

By Lemma~\ref{lemma: Janson}, we have the desired result. 
\end{proof}
In fact, one can go further and prove that the distribution of the number of unordered pairs $(x,y)$ with $d_{xy} \le 2k$ converges to a Poisson distribution. Analogous results for the number of copies of a strictly balanced subgraph in $G(n,p)$, when $p$ is a constant multiple of the threshold function, were derived by Bollob{\'a}s \cite{Bollobas2}, and  Karo{\'n}ski and Ruci{\'n}ski \cite{Karonski}.

\begin{lemma}
If $p=c2^{-\frac{n}{2}} n^{-k}$, then the number of unordered pair $(x,y)$ such that $d_{xy}\leq 2k$ is asymptotically Poisson distributed with mean $\frac{c^2}{2(2k)!}$.    
\end{lemma}
\begin{proof}
We follow the ideas from Chapter 5 of \cite{Random_graphs_Frieze2} by using Theorem~\ref{thm:random graphs} and Remark~\ref{rmk:random graphs}.  Let $H_1,H_2,\cdots, H_t$ be all the distinct unordered pairs $(x,y)$ with $d_{xy} \leq 2k$ on the graph $Q_{n,k}$. For $i=1,2,\cdots,t$, let 
\begin{align*}
 \mathbb{I}_{H_i} = 
    \begin{cases}
  1 & \text{if $H_i \subset \cA_0$} \\
   0 & \text{otherwise.} \\
\end{cases}
\end{align*}
Let $X_H=\sum_{i=1}^t\mathbb{I}_{H_i}$ and the $\ell$th factorial moment of $X_{H}$, $\ell=1,2,\cdots$, 
$$\mathbb{E}(X_H)_{\ell}=\mathbb{E}[X_H(X_H-1)\cdots(X_H-\ell+1)],$$
can be written as 
\begin{align*}
\mathbb{E}(X_H)_\ell & =\sum_{i_1,i_2,\cdots,i_{\ell}}\mathbb{P}(\mathbb{I}_{H_{i_1}}=1,\mathbb{I}_{H_{i_2}}=1,\cdots,\mathbb{I}_{H_{i_{\ell}}}=1)\\
& =D_{\ell}+\overline{D}_{\ell},
\end{align*}
where the summation is taken over all $\ell$ element sequences of distinct indices $i_j$ from $\{1,2,\cdots,t\}$, and $D_{\ell}$ and $\overline{D}_{\ell}$ denote the partial sums taken over all ordered $\ell$ tuples of unordered pair $(x,y)$ with $d_{xy}\leq 2k$ which are, respectively, pairwise disjoint ($D_\ell$) and not all pairwise disjoint ($\overline{D}_\ell$). 

Now we have 
\begin{align*}
    D_{\ell}& =\sum_{i_1,i_2,\cdots,i_{\ell}} \prod_{j=1}^{\ell}\mathbb{P}(\mathbb{I}_{H_{i_j}}=1)\\
    & =(1+o(1))\left(\frac{1}{2}2^{n}\binom{n}{2k} \right) \left(\frac{1}{2}\left(2^{n}-2 \right)\binom{n}{2k}\right)\cdots \left(\frac{1}{2}(2^{n}-2(\ell-1))\binom{n}{2k} \right)(p^2)^{\ell}\\
    & =(1+o(1))\left(2^{n-1}\binom{n}{2k}\right)^{\ell}p^{2\ell}
\end{align*}
and 
\begin{align*}
\left(\mathbb{E}[X_H]\right)^\ell &=\left(\sum_{i=1}^t2^{n-1}\binom{n}{i}p^2\right)^{\ell}\\
&=(1+o(1))\left(2^{n-1}\binom{n}{2k}\right)^{\ell}p^{2\ell}
\end{align*}
Note that $\left(2^{n-1}\binom{n}{2k}\right)^{\ell}p^{2\ell}=(1+o(1))(\frac{c^2}{2(2k)!})^{\ell}$. If we can show that 
$$\overline{D}_k=o(1),$$
then the proof is finished. 

Let us consider the family $\mathcal{G}_\ell(V,E)$ be the collection of all simple and undirected graphs with at least one edge, where the vertex set $V$ is $\ell$ distinct unordered pairs $(x,y)$ such that $d_{xy}\leq 2k$ and the edge set $E=\{(x_1,y_1)(x_2,y_2): \{x_1,y_1\} \cap \{x_2,y_2\} \neq \emptyset \}$. Note that there is a one-to-one correspondence between the set $\mathcal{G}_\ell(V,E)$ and the set of ordered $\ell$ tuples of unordered pairs $(x,y)$ with $d_{xy}\leq 2k$ which are not all pairwise disjoint. 

Now let us partition the graphs in this collection according to the number of components in the graph. Let $U_{\ell,i}$ denote a the collection of graphs with $i$ components and we have 
$$\mathcal{G}_{\ell}=\bigcup_{i=1}^{\ell-1} U_{\ell,i}. $$
Now let us consider a particular graph $G$ on the collection $U_{\ell,i}$. Since $G$ has $i$ components, it is not difficult to see that the vertex set of $G$ contains at least $2i+1$ distinct elements from $V(Q_{n,k})$. Moreover, it is easy to see that the number of of such ordered $\ell$ tuples of unordered pairs $(x,y)$ with $d_{xy}\leq 2k$ is at most $2^{ni}\binom{n}{2k}^{2l}$ Therefore, we have 
$$\overline{D}_\ell\leq \sum_{i=1}^{\ell-1}\sum_{U_{\ell,i} \in \mathcal{G}_{\ell}}p^{2i+1}2^{ni}n^{4kl} \leq \ell 2^{\binom{\ell}{2}}p^{2i+1}2^{ni}n^{4kl} =o(1),$$
which is the desired result. \qedhere

\end{proof}

\section{Critical probability for the 3-neighbor process} \label{sec:p3}
In this section we will prove Theorem~\ref{thm: r=3}. 

\subsection{The case of {$k=2$}}
We start with $k=2$ and treat general $k$ in the second part of this section.
Thus, in this part we will study $p_c(Q_{n,2},3)$.
The following lemma gives a necessary and sufficient condition on the initially infected set $\cA_0\subset V$
for the $r$-process to percolate. 
\begin{lemma} \label{lemma:triangle} 
$3$-neighbor percolation on $Q_{n,2}$ takes place if and only if there exist vertices $x_1,x_2,x_3\in \cA_0$ with pairwise distances at most $4$.
\end{lemma}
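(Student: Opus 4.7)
The proof splits into necessity (only-if) and sufficiency (if); both sides hinge on the triangle inequality for the Hamming metric together with the small-neighborhood structure of $Q_{n,2}$.

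For necessity, I would argue that if no triple $\{x_1,x_2,x_3\}\subseteq\cA_0$ has all three pairwise Hamming distances at most $4$, then no new vertex can ever be infected. Indeed, any $v\notin\cA_0$ infected at round~$1$ would have three neighbors $y_1,y_2,y_3\in\cA_0$ in $Q_{n,2}$, each at Hamming distance $\le 2$ from $v$; but then $d_H(y_i,y_j)\le d_H(y_i,v)+d_H(v,y_j)\le 4$ for all pairs, contradicting the hypothesis. Hence $\cA_1=\cA_0$, the process halts, and (with the trivial exception $\cA_0=V$, in which case the triple $\{0^n,e_1,e_2\}$ already witnesses the condition) percolation fails.

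For sufficiency, my plan is to show that starting from such a triple the infection eventually contains a $2$-dimensional subcube of $Q_{n,2}$, after which Lemma~\ref{lemma:subcube} (with $r=3$) finishes the argument. The first step is to exhibit a common ``near-vertex'' $v$ with $d_H(v,x_i)\le 2$ for each $i$, so that $v$ has three neighbors in $\cA_0$ and joins the infected set at round~$1$. Placing $x_1=0^n$ by symmetry and writing $A=\supp(x_2)$, $B=\supp(x_3)$, and $(p,q,r)=(|A\cap B|,|A\setminus B|,|B\setminus A|)$, the hypothesis becomes $p+q,\,p+r,\,q+r\le 4$, and I would construct $W:=\supp(v)$ by a short case analysis on $(p,q,r)$: take $W=A\cap B$ when $p,q,r\le 2$, and adjoin one element of $A\setminus B$ (or $B\setminus A$) to $A\cap B$ in the remaining cases. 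In every case one checks that $|W|,\,|W\triangle A|,\,|W\triangle B|\le 2$, which is exactly $d_H(v,x_i)\le 2$ for $i=1,2,3$.

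The second step, which I expect to be the main obstacle, is propagating from $\{x_1,x_2,x_3,v\}$ to a full $2$-dimensional subcube. The key remark is that each auxiliary triple $(x_i,x_j,v)$ again satisfies the pairwise-distance-$\le 4$ hypothesis, and in fact with two of its three distances already $\le 2$, so the Step-1 construction can be re-applied to it to infect further vertices clustered around $v$. A finite case check on the distance profile $(d_{12},d_{13},d_{23})$ -- whose worst case is $(4,4,4)$ -- will show that after at most two further rounds one obtains three infected vertices at pairwise Hamming distance $\le 2$; from such a ``tight'' triple one more step completes a $2$-dimensional subcube (for instance, from $\{0,e_i,e_j\}$ the vertex $e_i+e_j$ has three infected neighbors, and $\{0,e_i,e_j,e_i+e_j\}$ is a $2$-dimensional subcube), after which Lemma~\ref{lemma:subcube} yields percolation on $Q_{n,2}$.
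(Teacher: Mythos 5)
Your necessity argument is correct and is essentially the paper's (the triangle inequality applied to the three infected neighbors of any newly infected vertex), and your overall architecture for sufficiency --- infect a $2$-dimensional subcube and invoke Lemma~\ref{lemma:subcube} --- is also the paper's. However, the sufficiency direction as you present it has genuine gaps. First, your Step-1 recipe for the common near-vertex is not correct as stated: if $|A\cap B|=3$ (so $q,r\le 1$), both $W=A\cap B$ and $W=(A\cap B)\cup\{a\}$ have weight $\ge 3$ and are too far from $x_1$ (one must instead take a $2$-element subset of $A\cap B$); and in degenerate profiles the recipe returns a vertex that is already infected (e.g.\ for $(d_{12},d_{13},d_{23})=(1,1,2)$ one gets $W=\emptyset$, i.e.\ $v=x_1$, and when you ``re-apply Step~1'' to the triple $(x_1,x_2,v)$ in the $(4,4,4)$ case with $x_1=0$, $x_2=e_1+e_2+e_3+e_4$, $v=e_3+e_4$, the recipe gives $W=A\cap B=\{3,4\}$, i.e.\ $v$ itself), so the iteration need not produce new infected vertices.

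Second, and more importantly, the final step is wrong as a general claim: three infected vertices at pairwise distance $\le 2$ do \emph{not} always lie in a common $2$-dimensional subcube, so one more step does not complete a $Q_{2,2}$. For the profile $(2,2,2)$, e.g.\ $\{0,\,e_1+e_2,\,e_2+e_3\}$, any $2$-dimensional subcube containing $0$ and $e_1+e_2$ must free exactly coordinates $1,2$, and then cannot contain $e_2+e_3$; the paper's Case~2 needs two further infections ($e_2$, then $e_1+e_2+e_3$) to assemble the square. Finally, the heart of the proof --- showing that from \emph{every} admissible distance profile the process reaches an infected $Q_{2,2}$ --- is exactly the ``finite case check'' you defer; the paper carries it out explicitly over the nine unordered profiles $(d_{12},d_{13},d_{23})$ with explicit infection chains of up to four steps. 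Until that check is actually performed (with the corrections above), the sufficiency direction is a plan rather than a proof.
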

\begin{proof}
To prove the only if part, note that if no three vertices in $\cA_0$ form a triangle with 
sides at most 4, then the process cannot evolve beyond the initially infected set.

To prove sufficiency, suppose that $x_1, x_2,x_3\in {\cA_0}$ are three initially infected vertices. W.l.o.g, we can place $x_1$ at the origin. The proof proceeds by a case study depending on the values of the distances $d_{12},d_{13}$ and $d_{23}$. By inspection, if we disregard the order, there are 9 types of triangles that three vertices can form\footnote{These are all the unordered triples of integers $i,j,k$ with $1\le i,j,k\le 4$ such that $|i-j|\le k\le i+j$ and $i-j+k$ even.}. We will consider each of them, showing that after several steps of the process
there emerges an infected subcube $Q_{2,2}$, and then percolation follows 
from Lemma~\ref{lemma:subcube}.

Case 1: $(d_{12},d_{13},d_{23})=(1,1,2)$.
We already assumed that $x_1=0^n$. W.l.o.g. assume that $x_2=10^{n-1}$, and $x_3=010^{n-2}$. The subcube $Q_{2,2}$ will be formed by vertices $x_1,x_2,x_3,x_4,$ where $x_4=110^{n-2}$. Observe that $x_4$ is added in the next step of the process 
because $x_1,x_2,x_3\in N_{x_{_4}}$ (recall that $N_{v}$  is the neighborhood of $v$ in $Q_{n,k}$). We write this concisely as
   \begin{gather*}
   (x_1,x_2,x_3)\to x_4.
   \end{gather*}

Case 2: $(d_{12},d_{13},d_{23})=(2,2,2)$. Assume that $x_2=110^{n-2}$ and $x_3=0110^{n-3}$. We have $Q_{2,2}=(x_2,x_3,x_4,x_5)$, where 
\begin{gather*}
x_4=010^{n-2}, \quad x_5=1^30^{n-3}.
\end{gather*}
These vertices are infected following the sequence   
  \begin{gather*}
   (x_1,x_2,x_3)\to x_4, \quad (x_2,x_3,x_4)\to x_5.
   \end{gather*}

Case 3: $(d_{12},d_{13},d_{23})=(2,2,4)$. Assume that $x_2=110^{n-2}$ and $x_3=00110^{n-4}$. We have $Q_{2,2}=(x_1,x_3,x_5,x_6)$ where 
\begin{gather*}
x_5=0010^{n-3}, \quad x_6=00010^{n-4}.
\end{gather*}
These vertices  are infected following the sequence
\begin{align*}
&(x_1,x_2,x_3)\to x_4,  \quad (x_1,x_3,x_4)\to x_5,\quad  (x_1,x_3,x_5)\to x_6,
\end{align*}
where $x_4=0110^{n-3}$. 

Case 4:  $(d_{12},d_{13},d_{23})=(1,2,3)$. Assume that $x_2=0010^{n-2}$ and  $x_3=110^{n-2}$. We have $Q_{2,2}=(x_2,x_4,x_5,x_6)$, where
\begin{align*}
&x_4=0110^{n-3}, \quad x_5=1110^{n-3},\\
&x_6=1010^{n-3}.
\end{align*}
These vertices are infected following the sequence
\begin{align*}
 &(x_1,x_2,x_3)\to x_4,  \quad (x_2,x_3,x_4)\to x_5,\quad (x_2,x_4,x_5)\to x_6. 
\end{align*}

Case 5:  $(d_{12},d_{13},d_{23})=(2,3,3)$. Assume that $x_2=110^{n-2}$ and $x_3=01110^{n-4}$. We have $Q_{2,2}=(x_3,x_4,x_5,x_6)$, where
\begin{align*}
&x_4=0110^{n-3}, \quad x_5=1110^{n-3}, \quad x_6=11110^{n-4}.
\end{align*}
These vertices are infected following the sequence
\begin{align*}
&(x_1,x_2,x_3)\to x_4,  \quad (x_2,x_3,x_4)\to x_5,\quad
 (x_3,x_4,x_5)\to x_6. 
 \end{align*}

Case 6: $(d_{12},d_{13},d_{23})=(4,4,4)$. Assume that $x_2=11110^{n-4}$ and $x_3=0011110^{n-6}$. We have $Q_{2,2}=(x_4,x_5,x_6,x_7)$, where 
\begin{gather*}
x_4=00110^{n-4}, \quad x_5=011110^{n-5}, \\
 x_6=001110^{n-5}, \quad x_7=01110^{n-4}.
\end{gather*}
These vertices are infected following the sequence
\begin{gather*}
 (x_1,x_2,x_3)\to x_4,  \quad (x_2,x_3,x_4)\to x_5,\\
 (x_3,x_4,x_5)\to x_6, \quad (x_4,x_5,x_6)\to x_7.  
\end{gather*}

Case 7: $(d_{12},d_{13},d_{23})=(4,2,4)$. Assume that $x_2=11110^{n-4}$ and $x_3=000110^{n-5}$. We have $Q_{2,2}=(x_3,x_4,x_5,x_6)$, where 
\begin{align*}
&x_4=00110^{n-4}, \quad x_5=00010^{n-4},\quad x_6=001110^{n-5}. 
\end{align*}
These vertices are infected following the sequence
\begin{equation}\label{eq:case7}
\begin{aligned}
 &(x_1,x_2,x_3)\to x_4,  \quad (x_1,x_3,x_4)\to x_5, \quad  (x_3,x_4,x_5)\to x_6.
\end{aligned}
\end{equation}

Case 8: $(d_{12},d_{13},d_{23})=(4,1,3)$. Assume that $x_2=11110^{n-4}$ and $x_3=00010^{n-4}$. We have $Q_{2,2}=(x_3,x_4,x_5,x_6)$, where 
\begin{align*}
&x_4=00110^{n-4}, \quad x_5=01010^{n-4},\quad x_6=01110^{n-4}.
\end{align*}
The infection sequence is the same as  in (\ref{eq:case7}).

Case 9: $(d_{12},d_{13},d_{23})=(4,3,3)$. Assume that $x_2=11110^{n-4}$ and $x_3=001110^{n-5}$. We have $Q_{2,2}=(x_3,x_4,x_5,x_6)$, where 
\begin{align*}
&x_4=01110^{n-4}, \quad x_5=00110^{n-4},\quad x_6=011110^{n-5}.
\end{align*}
The infection sequence is again the same as in (\ref{eq:case7}).
\end{proof}

Now we are in a position to find the critical probability for the graph $G$ with 
the infection threshold $r=3$.

\begin{lemma}[Critical probability for the 3-neighbor process with $k=2$]\label{thm:k=2} We have
 \begin{equation}\label{eq:p2k}
 \frac{1}{\omega(n)}2^{-\frac{n}{3}} n^{-2} \leq p_c(Q_{n,k},3) \leq  \omega(n)2^{-\frac{n}{3}} n^{-2}.
 \end{equation}
\end{lemma}

\begin{proof} Call a triple of vertices $x_1,x_2,x_3\in Q_{n,k}$ {\em good} if each of $d_{12},d_{13},d_{23} \leq 4$.
By Lemma~\ref{lemma:triangle}, every vertex will be infected by the $r$-neighbor bootstrap percolation on $Q_{n,k}$ if and only if
the starting set $\cA_0$ contains a good triple. 
Let $X$ be the number of good triples {in $\cA_0$}. To compute $\ee X$, we count the number of triangles whose sides $(d_{12},d_{13},d_{23})$ fall under one of the nine cases enumerated in Lemma~\ref{lemma:triangle}. For instance, the number of triples with $(d_{12},d_{13},d_{23})=(1,1,2)$ is $\frac{2^nn(n-1)}{6}$ since the number of such ordered triple is $2^nn(n-1)$. Indeed, once we fix $x_1$, say to 0, there are $n$ options for $x_2$ and $n-1$ options for $x_3$ to
form the ordered triple. For other values of $(d_{12},d_{13},d_{23})$ the count of triples is found in a similar way, and we obtain 
 \begin{align*}
 \ee [X]  = \frac{1}{6}&p^32^n\Big[n(n-1)+ 2 \binom{n}{2}(n-2) + \binom{n}{2} \binom{n-2}{2}+  n\binom{n-1}{2} \\
      & + 2\binom{n}{2}\binom{n-2}{2}  
   +4\binom{n}{4}\binom{n-4}{2} + 4\binom{n}{4}(n-4)\\
   &+ 4\binom{n}{4} +\binom{n}{4} \binom{4}{2}(n-4)\Big].
 \end{align*}

Let $T_1=\{x_1,x_2,x_3\}$ and $T_1'=\{x_1',x_2',x_3'\}$ be a pair of good triples. For a triple $T$ let $X_T={\mathbbm 1}_{\{T\in \cA_0\}}$.
We have 
\begin{align*}
\text{Cov}(X_{T_1},X_{T_1'})& =\ee [X_{T_1}X_{T_1'}]-\ee [X_{T_1}] \ee  [X_{T_1'}]\\
&=p^{|T_1 \cup T_1'|}-p^{|T_1|+|T_1'|}\\
&=
\begin{cases}
  0& \text{if $|T_1 \cap T_1'|=0$} \\
  p^5-p^6 & \text{if $|T_1 \cap T_1'|=1$}\\
  p^4-p^6 & \text{if $|T_1 \cap T_1'|=2$}\\
  p^3-p^6 & \text{if $|T_1 \cap T_1'|=3$}.\\
\end{cases}
\end{align*}
Estimating the variance, we obtain
\begin{align*}
\text{Var}(X) &=\sum_{T_1,T_1'} \text{Cov}(X_{T_1},X_{T_1'})\\
&  =\sum_{|T_1 \cap T'|=1} \text{Cov}(X_{T_1},X_{T'})+\sum_{|T_1 \cap T'|=2} \text{Cov}(X_{T_1},X_{T'})+\sum_{|T_1 \cap T'|=3} \text{Cov}(X_{T_1},X_{T'})\\
& \leq C \Big\{(p^5-p^6) 2^n \binom{n}{4}^2 \binom{n}{2}^2
+(p^4-p^6)2^n \binom{n}{4}^2\binom{n}{2}\\
&\hspace*{1in} +(p^3-p^6)2^n\binom{n}{4}\binom{n}{2} \Big\}.
\end{align*}
where $C$ is a constant. For the first term above,  the number of good triples $T_1$ is at most $C(k)2^n \binom{n}{4}\binom{n}{2}$ and $T_1'$ is at most $C(k)\binom{n}{4}\binom{n}{2}$ since $|T_1 \cap T_1'|=1$. Other terms can be derived in a similar way. 

In order to satisfy the assumption that $\Var(X)=o((\ee X)^2)$, it suffices to take 
$$p^32^n\binom{n}{4}\binom{n}{2} \gg \left(p^52^n\binom{n}{4}^2\binom{n}{2}^2\right)^{1/2}$$

$$p^32^n\binom{n}{4}\binom{n}{2} \gg \left(p^42^n\binom{n}{4}^2\binom{n}{2}\right)^{1/2}$$
and 
$$p^32^n\binom{n}{4}\binom{n}{2} \gg \left(p^32^n\binom{n}{4}\binom{n}{2}\right)^{1/2}.$$

A direct check shows that 
if $p \gg 2^{-\frac{n}{3}}n^{-2}$, then $\Var(X)=o((\ee X)^2) $ and $\ee X \to \infty$. By Lemma~\ref{lemma15}, 
$\pp(X>0)\to 1$, and then Lemma~\ref{lemma:triangle} implies that percolation occurs, proving the lower bound in 
\eqref{eq:p2k}.
At the same time, if $p\ll 2^{-\frac{n}{3}}n^{-2}$, then by Markov's inequality, $\pp(X \geq 1)\leq \ee [X]=o(1)$. Thus 
with probability $\to 1$ there are no good triples in $\cA_0$, so by Lemma~\ref{lemma:triangle} the process does not percolate.
This proves the upper bound.
\end{proof}
\begin{remark}
    This lemma forms a particular case of Lemma \ref{thm: k>=2}. At the same time, that lemma relies on Lemma~\ref{lemmaV2},
which does not apply to $k=2$, so we kept the presentation of the case $k=2$ self-contained. 
\end{remark}

\subsection{General $k\ge 3$}
We begin with an extension of Lemma~\ref{lemma:triangle} beyond $k=2$.
\begin{lemma} \label{lemmaV2} $3$-neighbor percolation on $Q_{n,k}$ takes place if and only if there exist vertices $x_1,x_2,x_3\in \cA_0$ with pairwise distances at most $2k$. 
\end{lemma}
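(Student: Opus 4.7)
My approach mirrors the structure of Lemma~\ref{lemma:triangle}. For the only-if direction, if some vertex $v\notin\cA_0$ becomes infected in the first step it must have three neighbors $u_1,u_2,u_3\in\cA_0$, and the triangle inequality gives $d(u_i,u_j)\le d(u_i,v)+d(v,u_j)\le 2k$; hence if $\cA_0$ contains no triple with pairwise distances at most $2k$, no vertex can be added to the initial set, and the process never advances.

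For the if direction, given a good triple $x_1,x_2,x_3\in\cA_0$ I would aim to infect a $2$-dimensional Boolean subcube of $Q_{n,k}$, after which Lemma~\ref{lemma:subcube} (with $r=3$) yields full percolation. The first auxiliary task is to produce a common neighbor $y$ with $d(y,x_i)\le k$ for every $i$. Place $x_1=0^n$ and consider the bitwise majority vector $m$ of the triple. A direct count gives $d(m,x_i)=T_i:=(d_{ij}+d_{ik}-d_{jk})/2$, and since $T_i+T_j=d_{ij}\le 2k$ for each pair at most one $T_i$ can exceed $k$. If $T_1>k$ (say), flipping exactly $T_1-k$ coordinates of $m$ where $x_1$ sits in the minority produces a $y$ with $d(y,x_1)=k$ and $d(y,x_j)=T_j+(T_1-k)=d_{1j}-k\le k$ for $j=2,3$; thus $y$ has three infected neighbors and is infected in one step. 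When all $T_i\le k$ the majority vertex $m$ itself qualifies, with a trivial one-bit perturbation if $m$ happens to coincide with an $x_i$.

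The main obstacle is promoting this single common neighbor into four infected vertices forming a Boolean $2$-subcube. My plan is to use the slack in the common-neighbor construction. Writing $A=\supp(x_2)\cap\supp(x_3)$, $B=\supp(x_2)\setminus\supp(x_3)$, $C=\supp(x_3)\setminus\supp(x_2)$ with sizes $a,b,c$ satisfying $a+b,\,a+c,\,b+c\le 2k$, I would pick two coordinates $i,j$ drawn from whichever of $A,B,C$ admits enough room so that each of $y,\,y\oplus e_i,\,y\oplus e_j,\,y\oplus e_i\oplus e_j$ is still within distance $k$ of every $x_\ell$; these four are then all infected in one round and form a $2$-dimensional Boolean subcube. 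In the residual cases where no single region supplies such a pair $(i,j)$, I would iterate the common-neighbor construction on one of the triples $(x_1,x_2,y)$, $(x_1,x_3,y)$, $(x_2,x_3,y)$ (each is good because $d(y,x_i)\le k\le 2k$) and continue until three infected vertices with pairwise distance at most $4$ emerge; at that point the explicit sequences of Lemma~\ref{lemma:triangle} build the required subcube inside $Q_{n,k}$, since every edge they invoke has length at most $2\le k$. The delicate point is to confirm that a bounded number of iterations always suffice, which reduces to a finite case split on the triple $(a,b,c)$ subject to the pairwise-sum bounds.
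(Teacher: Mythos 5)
Your only-if direction and your construction of a single common neighbor are correct: the identity $d(m,x_i)=(d_{ij}+d_{ik}-d_{jk})/2$ for the majority vector, the observation that $T_i+T_j=d_{ij}\le 2k$ forces at most one $T_i$ to exceed $k$, and the correction by flipping $T_1-k$ of $x_1$'s minority coordinates all check out, and they give a cleaner, coordinate-free route to the first newly infected vertex than the paper's normalization $x_1=0^n$, $x_2=1^{a_1+a_2}0^{n-a_1-a_2}$, $x_3=0^{a_1}1^{a_2+a_3}0^{n-a_1-a_2-a_3}$ (your $(a,b,c)$ is the same parametrization as the paper's $(a_2,a_1,a_3)$).

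However, the step from one common neighbor to an infected $2$-dimensional subcube---which is the entire substance of the lemma---is left unproved, and both mechanisms you sketch fail on identifiable configurations. For the first mechanism, take $d_{12}=d_{13}=d_{23}=2k$: then $T_1=T_2=T_3=k$, so $y=m$ satisfies $d(y,x_i)=k$ with no slack, and at any coordinate where $y$ differs from some $x_i$ the other two vertices agree with $y$, so flipping it pushes two of the three distances to $k+1$. Hence no pair $(i,j)$ from any of $A,B,C$ yields four vertices all within distance $k$ of all three $x_\ell$, and the one-round subcube plan cannot work here (the paper's Case 2.4 needs a four-step sequence for exactly this triple). For the second mechanism, the iteration can stall: if $d_{12}=2k$ and $d(y,x_1)=d(y,x_2)=k$, then $y$ lies on a geodesic between $x_1$ and $x_2$, the majority vector of the triple $(x_1,x_2,y)$ is $y$ itself, and no one-bit perturbation of $y$ stays within distance $k$ of both $x_1$ and $x_2$ (the supports of $y\oplus x_1$ and $y\oplus x_2$ are disjoint), so your recipe returns nothing new. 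One can escape by picking a different common neighbor on the same geodesic and then exploiting the resulting short side, but that is precisely the case analysis you defer, and you give no argument that a bounded number of iterations always reaches a triple with pairwise distances at most $4$. The paper resolves this by an exhaustive split on $(a_1,a_2,a_3)$, keyed to $a_2\lessgtr k$ and to which $a_i$ vanish or exceed $k$, writing down in each case four explicit vertices and an infection sequence of length at most $4$ ending in a $Q_{2,k}$ subcube; some analogue of that enumeration, or a genuinely uniform replacement for it, is still needed to close your proof.
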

\begin{proof}
Without loss of generality, assume that $x_1=0^n$,  $x_2=1^{a_1+a_2}0^{n-a_1-a_2}$ and $x_3=0^{a_1}1^{a_2+a_3}0^{n-a_1-a_2-a_3}$.
The distance assumptions translate into $a_i+a_j\le 2k$ with $(i,j)=(1,2), (1,3)$ or $(2,3)$.
The proof proceeds by case analysis in accordance with $a_2\lesseqgtr k$.

\vspace*{.1in}
\paragraph{\bf 1. $a_2 >k$} In this case $a_1<k$ and $a_3 <k$.

\begin{enumerate}[label=1.\arabic*]
    \item $a_1 >0$ and $a_3 >0$. Our plan is to assemble an infected $2$-dimensional subcube and use Lemma~\ref{lemma:subcube}. The 
    subcube will be formed by the vertices, which are added to the infected subset one by one.
    \begin{gather*}
     x_4=0^{a_1}1^k0^{n-a_1-k}, \quad x_5=0^{a_1-1}1^{k+1}0^{a_2-k}10^{n-a_1-a_2-1},\\
     x_6=0^{a_1-1}1^{k+1}0^{n-a_1-k}, \quad x_7=0^{a_1}1^k0^{a_2-k}10^{n-a_1-a_2-1}.
    \end{gather*}
We follow the writing pattern used in the proof of Lemma~\ref{lemma:triangle}. The infection sequence is as follows:
   \begin{gather*}
   (x_1,x_2,x_3)\to x_4,\quad
   (x_2,x_3,x_4)\to x_5,\\
   (x_2,x_4,x_5)\to x_6,\quad
   (x_4,x_5,x_6)\to x_7.
   \end{gather*}
The remaining cases follow the same pattern.
\item $a_1=0$ and $a_3 >0$. The subcube that proves overall percolation is $Q_{k,2}=(x_4,x_5,x_6,x_7)$, where
  \begin{align*}
   &x_4=1^k0^{n-k} , \quad   x_5=1^{k+1}0^{n-k-1},\\ &x_6=1^{k+2}0^{n-k-2}, \quad x_7=1^{k}010^{n-k-2}.
  \end{align*}
We have
  \begin{equation}
  \begin{split}\label{eq:infection-steps1}
    (x_1,x_2,x_3)\to x_4,\quad
    (x_2,x_3,x_4)\to x_5,\\
    (x_3,x_4,x_5)\to x_6,\quad
    (x_4,x_5,x_6)\to x_7.
  \end{split}
  \end{equation}

 \item $a_1 >0$ and $a_3=0$. The proof is the same as in the previous case by symmetry.
\end{enumerate}

\vspace*{.1in}
\paragraph{\bf 2. $a_2 =k$} In this case $a_1\le k$ and $a_3\le k$.
  \begin{enumerate}[label=2.\arabic*]
      \item $0 <a_1 \leq k$ and $ 0<a_3 < k.$ We have $Q_{k,2}=(x_4,x_5,x_6,x_7)$, where
    \begin{align*}
        &x_4=0^{a_1}1^{a_2}0^{n-a_1-a_2}, \quad x_5=0^{a_1-1}1^{a_2+1}0^{n-a_1-a_2},\\
        & x_6=0^{a_1}1^{a_2+1}0^{n-a_1-a_2-1},\quad x_7=0^{a_1-1}1^{a_2+2}0^{n-a_1-a_2-1}
    \end{align*}
and these vertices are infected in 4 steps starting with $x_1,x_2,x_3$ and following the sequence in \eqref{eq:infection-steps1}.

      \item $a_1=0$ and $ 0<a_3 < k.$ We have $Q_{k,2}=(x_2,x_4,x_5,x_6)$, where
    \begin{align*}
        &x_4=1^{k-1}0^{n-k+1}, \quad x_5=1^{k+1}0^{n-k-1},\quad x_6=1^{k-1}010^{n-k-1}
    \end{align*}
and these vertices are infected in 3 steps starting with $x_1,x_2,x_3$ and following the sequence 
\begin{equation}\label{eq:2.2}
\begin{aligned}
    (x_1,x_2,x_3)\to x_4,\quad (x_2,x_3,x_4)\to x_5,\quad (x_2,x_4,x_5)\to x_6.
\end{aligned}
\end{equation}

      \item $ 0<a_1 \leq k$ and $ a_3=0.$ 
      If $ 0<a_1 <k$ the proof is the same as in 2.2 due to symmetry. Therefore we can assume that $a_1=k$.       
      We have $Q_{k,2}=(x_3,x_4,x_5,x_6)$, where
    \begin{align*}
        &x_4=0^{k-1}1^k0^{n-2k+1}, \quad x_5=0^{k-1}1^{k+1}0^{n-k},\\
        & x_6=0^{k}1^{k-1}0^{n-2k+1},
    \end{align*}
and these vertices are infected in 3 steps starting with $x_1,x_2,x_3$ and following the sequence 
\eqref{eq:2.2}.

      \item $ a_1=a_3= k.$ We have $Q_{k,2}=(x_4,x_5,x_6,x_7)$, where
    \begin{align*}
        &x_4=0^{k}1^k0^{n-2k}, \quad x_5=0^{k-1}1^{k+2}0^{n-2k-1},\\
        & x_6=0^{k}1^{k+1}0^{n-2k-1}, \quad x_7=0^{k-1}1^{k+1}0^{n-2k}.
    \end{align*}
These vertices are infected in 4 steps starting with $x_1,x_2,x_3$ and following the sequence 
\eqref{eq:infection-steps1}.
  \end{enumerate}

\paragraph{\bf 3. $a_2 <k$}

  \begin{enumerate}[label=3.\arabic*]
      \item $ a_1+a_3 \leq k.$
    \begin{enumerate}[label=3.1.\arabic*]
      \item $ a_1 >0, a_2 >0$, and $a_3 >0.$ We have $Q_{k,2}=(x_4,x_5,x_6,x_7)$, where
    \begin{align*}
        &x_4=0^{a_1}1^{a_2}0^{n-a_1-a_2}, \quad x_5=0^{a_1-1}1^{a_2+2}0^{n-a_1-a_2-1},\\
        & x_6=0^{a_1}1^{a_2+1}0^{n-a_1-a_2-1},\quad x_7=0^{a_1-1}1^{a_2+1}0^{n-a_1-a_2}.
    \end{align*}
These vertices are infected in 4 steps starting with $x_1,x_2,x_3$ and following the sequence in \eqref{eq:infection-steps1}.
 
      \item $ a_1 >0, a_2 =0$, and $a_3 >0.$ We have $Q_{k,2}=(x_1,x_4,x_5,x_6)$, where
    \begin{align*}
        &x_4=10^{n-1}, \quad x_5=010^{n-2},\quad x_6=110^{n-2}.
    \end{align*}
These vertices are infected in a single step starting with $x_1,x_2,x_3$:
  \begin{equation*} 
 (x_1,x_2,x_3)\to x_4,\quad
    (x_1,x_2,x_3)\to x_5,\quad (x_1,x_2,x_3)\to x_6.
   \end{equation*}

      \item\label{item: +0+} $ a_1 >0, a_2 =0$, and $a_3 >0.$ We have $Q_{k,2}=(x_2,x_4,x_5,x_6)$, where
    \begin{align*}
        &x_4=1^{a_2+1}0^{n-a_2-1}, \quad x_5=1^{a_2+2}0^{n-a_2-2},\quad x_6=1^{a_2}010^{n-a_2-2}
    \end{align*}
and these vertices are infected in 2 steps starting with $x_1,x_2,x_3$ and following the sequence 
  \begin{equation*}
  \begin{split}
    &(x_1,x_2,x_3)\to x_4,\quad
    (x_2,x_3,x_4)\to x_5,\\
    &(x_2,x_3,x_4)\to x_6.\quad 
  \end{split}
  \end{equation*}

  \item $ a_1 >0, a_2 >0$, and $a_3 =0.$ 
Due to symmetry the proof is the same as in Case~\ref{item: +0+}.
      \end{enumerate}

      \item $ a_1+a_3 > k,$ and $k \geq 3$.

      \begin{enumerate}[label=3.2.\arabic*]
    \item\label{item: >k<k<k-1a}
    $a_3 > k$, $a_1 < k$, $a_2< k-1$ and $a_1 \geq a_2+2$. 
Let $x_4=0^{a_1-l_1}1^{a_2+l_2+l_1}0^{n-a_1-a_2-l_2}$.

The vertices $x_1$, $x_2$ and $x_3$ will fall in $N(x_4)$ if
\begin{equation}\label{eq:l1l2}
\left.\begin{aligned}
l_1+l_2+a_2 \leq k,  \\
l_1+a_3-l_2 \leq k,  \\
l_2+a_1-l_1 \leq k.   
\end{aligned}
\right\}
\end{equation}

Let 
    $$
    l_1=\left \lfloor \frac{a_1-a_2}{2} \right \rfloor, \quad l_2=k-\left \lceil \frac{a_2+a_1}{2} \right \rceil.
    $$
It is easy to check that $l_1$ and $l_2$ satisfy (\ref{eq:l1l2}). It is also easy to see that $a_1 \geq l_1$ and $a_3 \geq l_2.$ 
Finally, $a_1\geq a_2+2$ implies that $l_1 \geq 1$ and  $ a_2+2 \leq a_1 <k$ implies $l_2 \geq 1$.

\item $a_3 > k$, $a_1 < k$, $a_2< k-1$ and $a_1 \leq a_2$.
Let $l_1=0.$ and  $l_2=k-a_2.$
It is easy to check that if $a_1\leq a_2$ then $l_1$ and $l_2$ satisfy \eqref{eq:l1l2}. 

\item $a_3 > k$, $a_1 < k$, $a_2< k-1$, $a_1=a_2+1$ and $a_2 \geq \frac{k}{3}$.

Let 
$$l_1=k-\left \lceil \frac{a_2+a_3}{2} \right \rceil$$
 and 
 $$l_2=\left \lfloor \frac{a_3-a_2}{2} \right \rfloor.$$
It is easy to check that $l_1$ and $l_2$ satisfy (\ref{eq:l1l2}). Note that $l_1 \geq 1$ since 
    $$
    a_2+a_3 \leq 2k-2
    $$ 
and that $a_1 \geq l_1$ which requires 
\begin{align}
    2a_1+a_2+a_3 \geq 2k. \label{equation4}
\end{align}
In order for (\ref{equation4}) to be satisfied it is sufficient that $a_2 \geq \frac{k}{3}$. 

    \item\label{item: >k<k<k-1b} $a_3 > k$, $a_1 < k$, $a_2< k-1$, $a_1=a_2+1$ and $a_2 < \frac{k}{3}$.
Let $l_1=0$ and $l_2=k-a_1$. It is easy to check that $l_1$ and $l_2$ satisfy (\ref{eq:l1l2}). 
It is also easy to see that $l_2 \geq 2$ as long as $k \geq 3$, and this implies that $a_1 \leq k-2$. 

Now for Cases \ref{item: >k<k<k-1a}--\ref{item: >k<k<k-1b}, we need to construct vertices $x_5,x_6$ and $x_7$ such 
that each of them will be infected by the percolation process, and together with $x_4$ they form a subcube $Q_{2,k}$. 
Given $l_1 \geq 1 $ and $l_2 \geq 1$, we define these vertices in the form
   \begin{align*}
            & x_5=0^{a_1-l_1+1}1^{a_2+l_2+l_1-1}0^{n-a_1-a_2-l_2},\\
&x_6=0^{a_1-l_1}1^{a_1+l_2+l_1-1}0^{n-a_1-a_2-l_2+1},\\
              & x_7=0^{a_1-l_1+1}1^{a_2+l_2+l_1-2}0^{n-a_1-a_2+1}
   \end{align*}
and note that they are infected starting with $x_1,x_2,x_3$ and following the sequence 
  \begin{equation*} 
    (x_1,x_3,x_4)\to x_5,\quad (x_1,x_2,x_4)\to x_6,\quad (x_4,x_5,x_6)\to x_7.
  \end{equation*}
Now given $l_1=0$ and $l_2 \geq 2$, we have 
\begin{align*}
&  x_5=0^{a_1}1^{l_2+a_2-1}0^{n-a_1-a_2-l_2+1},
\quad x_6=0^{a_1}1^{l_2+a_2-2}10^{n-a_1-a_2-l_2+1},\\
& x_7=0^{a_1}1^{l_2+a_2-2}0^{n-a_1-a_2+2},
\end{align*}
where these vertices are infected starting with $x_1,x_2,x_3$ and following the sequence 
  \begin{equation*} 
    (x_1,x_2,x_4)\to x_5,\quad
    (x_1,x_2,x_4)\to x_6,\quad
    (x_4,x_5,x_6)\to x_7.
  \end{equation*}

\item  $a_3 > k$, $a_1 < k$ and $a_2= k-1$

Since $a_2=k-1$, then $a_3=k+1$. 
We have $Q_{k,2}=(x_4,x_5,x_6,x_7)$, where 
\begin{align*}
& x_4=0^{a_1}1^{k}0^{n-a_1-k}, \quad x_5=0^{a_1+1}1^{k}0^{n-a_1-k-1},\\
& x_6=0^{a_1-1}1^{k+1}0^{n-a_1-k}, \quad x_7=0^{a_1+1}1^{k-1}0^{n-a_1-k},
\end{align*}
and these vertices are infected starting with $x_1,x_2,x_3$ and following the sequence 
\begin{equation*} 
  \begin{split}
    (x_1,x_2,x_3)\to x_4,\quad
    (x_1,x_3,x_4)\to x_5,\\
    (x_3,x_4,x_5)\to x_6,\quad 
    (x_4,x_5,x_6)\to x_7. 
  \end{split}
  \end{equation*}
    \item $a_3 \leq k$, $a_1 \leq k$ and $a_2 >0$.
    We have $Q_{k,2}=(x_4,x_5,x_6,x_7)$, where 
\begin{align*}
& x_4=0^{a_1}1^{a_2}0^{n-a_1-a_2}, \quad x_5=0^{a_1}1^{a_2+1}0^{n-a_1-a_2-1},\\
&  x_6=0^{a_1-1}1^{a_2+1}0^{n-a_1-a_2}, \quad x_7=0^{a_1-1}1^{a_2+2}0^{n-a_1-a_2-1}.
\end{align*}
These vertices will be infected starting with $x_1,x_2,x_3$ and following the sequence 
\begin{equation*} 
  \begin{split}
    (x_1,x_2,x_3)\to x_4,\quad
    (x_1,x_3,x_4)\to x_5,\\
    (x_2,x_4,x_5)\to x_6,\quad 
    (x_4,x_5,x_6)\to x_7. \\
  \end{split}
  \end{equation*}

    \item  $a_3 \leq k$, $a_1 \leq k$ and $a_2 =0$.
    We have $Q_{k,2}=(x_1,x_4,x_5,x_6)$, where the vertices
\begin{align*}
&  x_4=0^{a_1-1}110^{n-a_1-1}, \quad x_5=0^{a_1}10^{n-a_1-1},\\
&   x_6=0^{a_1+1}10^{n-a_1-2}
\end{align*}
will be infected starting with $x_1,x_2,x_3$ and following the sequence 
\begin{equation*} 
  \begin{split}
    &(x_1,x_2,x_3)\to x_4,\quad
    (x_1,x_3,x_4)\to x_5,\\
    &(x_1,x_4,x_5)\to x_6.
  \end{split}
  \end{equation*}
\end{enumerate} 
\end{enumerate}
This concludes the proof.
\end{proof}

This lemma enables us to find the main term of the critical probability of percolation. 
\begin{lemma}[Critical probability for the 3-neighbor process] \label{thm: k>=2}
We have 
     $$ \frac{1}{\omega(n)}2^{-\frac{n}{3}} n^{-k} \leq p_c(Q_{n,k},3) \leq  \omega(n)2^{-\frac{n}{3}} n^{-k}.
     $$
\end{lemma}

\begin{proof}
By Lemma~\ref{lemmaV2}, percolation occurs if and only if the initially infected set $\cA_0$ contains a triangle with all sides 
at most $2k$. Let $X$ be the number of such triangles in $\cA_0$. 
Let $x_1=0^n$, $x_2=1^{a_1+a_2}0^{n-a_1-a_2}$, and $x_3=0^{a_1}1^{a_2+a_3}0^{n-a_1-a_2-a_3}$, where all the pairs $(a_i,a_j)$ 
satisfy $a_i+a_j\le 2k$. Then
  \begin{align*}
 \ee [X]  & = \frac{1}{6}p^32^n\sum_{\begin{substack}{a_1,a_2,a_3\\a_i+a_j\le 2k}\end{substack}}
 \binom{n}{a_1+a_2}\binom{a_1+a_2}{a_2}\binom{n-a_1-a_2}{a_3}.
 \end{align*}
Since $k$ is a constant, for large $n$ the largest term in the sum is given by 
   $
   \binom{n}{2k}\binom{2k}{k}\binom{n-2k}{k}.
   $

Next let us estimate the variance of $X$. Let $T_1=\{x_1,x_2,x_3\}$ and $T_2=\{x_1',x_2',x_3'\}$ be two triangles with 
all the sides at most $2k$. For a triple $T$ define the indicator random variable $X_T={\mathbbm 1}_{\{T\in \cA_0\}}$.
We have 
\begin{align*}
\text{Cov}(X_{T_1},X_{T_2})& =\ee [X_{T_1}X_{T_2}]-\ee [X_{T_1}] \ee  [X_{T_2}]\\
&=p^{|T_1 \cup T_2|}-p^{|T_1|+|T_2|}\\
&=
\begin{cases}
  0& \text{if $|T_1 \cap T_2|=0$} \\
  p^5-p^6 & \text{if $|T_1 \cap T_2|=1$}\\
  p^4-p^6 & \text{if $|T_1 \cap T_2|=2$}\\
  p^3-p^6 & \text{if $|T_1 \cap T_2|=3$}.
\end{cases}
\end{align*}
Thus, we obtain 
\begin{align*}
\Var(X) &=\sum_{T_1,T_2} \text{Cov}(X_{T_1},X_{T_2})\\
& =\sum_{|T_1 \cap T_2|=1} \text{Cov}(X_{T_1},X_{T_2})+\sum_{|T_1 \cap T_2|=2} \text{Cov}(X_{T_1},X_{T_2})+\sum_{|T_1 \cap T_2|=3} \text{Cov}(X_{T_1},X_{T_2})\\
&\leq  C(k)2^n\binom{n}{2k} \binom{n}{k} \Big[(p^5-p^6)  \binom{n}{2k}\binom{n}{k} 
 +(p^4-p^6)\binom{n}{2k}\\
&\quad  +(p^3-p^6) \Big].
\end{align*}
where $C(k)$ is a constant that depends only on $k$. For the first term above,  the number of good triples $T_1$ is at most $C(k)2^n \binom{n}{2k}\binom{n}{k}$ and $T_2$ is at most $C(k)\binom{n}{2k}\binom{n}{k}$ since $|T_1 \cap T_2|=1$. Other terms can be derived in a similar way.

In order to fit the assumption that $\Var(X)=o((\ee X)^2)$, it suffices to take 
$$p^32^n\binom{n}{2k}\binom{n}{k} \gg \left(p^52^n\binom{n}{2k}^2\binom{n}{k}^2\right)^{1/2}$$

$$p^32^n\binom{n}{2k}\binom{n}{k} \gg \left(p^42^n\binom{n}{2k}^2\binom{n}{k}\right)^{1/2}$$
and 
$$p^32^n\binom{n}{2k}\binom{n}{k} \gg \left(p^32^n\binom{n}{2k}\binom{n}{k}\right)^{1/2}.$$

It is easy to check that if $p \gg 2^{\frac{-n}{3}}n^{-k}$, then $\Var(X)=o((\ee X)^2)$ and $\ee X \to \infty$, and the lower bound
on $p_c$ follows by Lemma~\ref{lemma15}. On the other hand, if $p \ll 2^{\frac{-n}{3}}n^{-k}$, then by Markov's inequality, 
$\pp(X \geq 1)\leq \ee [X]=o(1)$.
\end{proof}

If  $p=c2^{-\frac{n}{3}} n^{-k}$, then the probability of full infection converges to a constant. 
\begin{lemma}
Let $c >0$ be a constant and $p=c 2^{-\frac{n}{3}} n^{-k}$. As $n \rightarrow \infty$, we have 
\begin{align*}
 \mathbb{P}(\cA_0 \; \text{percolates}) \rightarrow 1- \exp \left(-\frac{c^3}{6\binom{2k}{k}(2k)! k!}\right). 
\end{align*}
\end{lemma}
\begin{proof}
Define  
\begin{equation*}
  \mathbb{I}_{i}=  
  \begin{cases}
   1 & \text{if $i$th triple $(x,y,z)$ with $d_{xy}\leq 2k$, $d_{yz}\leq 2k$, and $d_{xz}\leq 2k$ is initially infected} \\
  0 & \text{otherwise}.
\end{cases}.
\end{equation*}
Define $N=\frac{1}{6}2^n\binom{n}{2k}\binom{2k}{k}\binom{n-2k}{k}(1+o(1))$. Then we have 
    \begin{align}
    \mathbb{P}(\cA_0 \; \text{does not percolates})
    & = \mathbb{P}(\sum_{i=1}^{N}\mathbb{I}_{i}=0)\\
    & = \mathbb{P}(\cap_i^{N} \{\mathbb{I}_{i}=0\})\\
    & \geq \prod_{i=1}^{N}\mathbb{P}(\{\mathbb{I}_i=0\}). \label{eq2: use FKG} \\
    & =(1-p^3)^N\\
    & \rightarrow \exp \left(-\frac{c^3}{6\binom{2k}{k}(2k)! k!}\right)
    \end{align}
Note that the inequality~\ref{eq2: use FKG} is true due to Lemma~\ref{lemma: FKG}. 

Now let us prove the upper bound by using the inequality in Lemma~\ref{lemma: Janson}.

Let $R$ be the set of initially infected vertices and $\{A_i\}_{i=1}^N$ be a collection of the sets of pair $(x,y)$ with $d_{xy}\leq 2k$. Let $B_i$ be the event that $A_i \subset R$. 

From the proof of Lemma ~\ref{thm: k>=2}, we have  $$\mu:=\mathbb{E}(\sum_{i=1}^N \mathbb{I}_{B_i})=Np^3$$ and  

$$\Delta:=\sum_{i \sim j}\mathbb{P}(B_i \cap B_j) \leq 2^n\binom{n}{2k} \binom{n}{k} \Big[p^5  \binom{n}{2k}\binom{n}{k} +p^4\binom{n}{2k} 
\Big] \leq o(1)$$
where $i \sim j$ if $A_i \cap A_j \neq \emptyset$ for $i \neq j$. 

By Lemma~\ref{lemma: Janson}, we have the desired result. 
\end{proof}
Similar to the case where $r=2$, we can characterize the distribution of the number of unordered triples $(x,y,z)$ satisfying $d_{xy}\leq 2k$, $d_{xz} \leq 2k$ and $d_{yz} \leq 2k$. 
\begin{lemma}
 If $p=c2^{-\frac{n}{3}} n^{-k}$, then the number of unordered triples $(x,y,z)$ such that $d_{xy}\leq 2k$, $d_{xz} \leq 2k$ and $d_{yz} \leq 2k$ is asymptotically Poisson distributed with mean $\frac{c^3}{6\binom{2k}{k}(2k)! k!}$.   
\end{lemma}

\begin{proof}
 We follow the ideas from Chapter 5 of \cite{Random_graphs_Frieze2} by using Theorem~\ref{thm:random graphs} and Remark~\ref{rmk:random graphs}.  Let $H_1,H_2,\cdots, H_t$ be all the distinct unordered triples $(x,y,z)$ with $d_{xy} \leq 2k$, $d_{xz} \leq 2k$, and $d_{yz} \leq 2k$ on the graph $Q_{n,k}$. For $i=1,2,\cdots,t$, let 
\begin{align*}
 \mathbb{I}_{H_i} = 
    \begin{cases}
  1 & \text{if $H_i \subset \cA_0$} \\
   0 & \text{otherwise.} \\
\end{cases}
\end{align*}
Let $X_H=\sum_{i=1}^t\mathbb{I}_{H_i}$ and the $\ell$th factorial moment of $X_{H}$, $\ell=1,2,\cdots$, 
$$\mathbb{E}(X_H)_{\ell}=\mathbb{E}[X_H(X_H-1)\cdots(X_H-\ell+1)],$$
can be written as 
\begin{align*}
\mathbb{E}(X_H)_\ell & =\sum_{i_1,i_2,\cdots,i_{\ell}}\mathbb{P}(\mathbb{I}_{H_{i_1}}=1,\mathbb{I}_{H_{i_2}}=1,\cdots,\mathbb{I}_{H_{i_{\ell}}}=1)\\
& =D_{\ell}+\overline{D}_{\ell},
\end{align*}
where the summation is taken over all $\ell$ element sequences of distinct indices $i_j$ from $\{1,2,\cdots,t\}$, and $D_{\ell}$ and $\overline{D}_{\ell}$ denote the partial sums taken over all ordered $\ell$ tuples of unordered triples $(x,y,z)$ with $d_{xy}\leq 2k$, $d_{xz} \leq 2k$, and $d_{yz} \leq 2k$, which are, respectively, pairwise disjoint ($D_\ell$) and not all pairwise disjoint $\overline{D}_\ell$. 

Now we have 
\begin{align*}
    D_{\ell}& =\sum_{i_1,i_2,\cdots,i_{\ell}} \prod_{j=1}^{\ell}\mathbb{P}(\mathbb{I}_{H_{i_j}}=1)\\
    & =(1+o(1))\left(\frac{1}{6}2^{n}\binom{n}{2k}\binom{2k}{k}\binom{n-2k}{k} \right) \left(\frac{1}{6}\left(2^{n}-3 \right)\binom{n}{2k}\binom{2k}{k}\binom{n-2k}{k}\right)\cdots \\
    &\left(\frac{1}{6}(2^{n}-3(\ell-1))\binom{n}{2k}\binom{2k}{k}\binom{n-2k}{k} \right)(p^3)^{\ell}\\
    & =(1+o(1))\left(\frac{1}{6}2^{n}\binom{n}{2k}\binom{2k}{k}\binom{n-2k}{k} \right) ^{\ell}p^{3\ell}
\end{align*}
and 
\begin{align*}
\left(\mathbb{E}[X_H]\right)^\ell=(1+o(1))\left(\frac{1}{6}2^{n}\binom{n}{2k}\binom{2k}{k}\binom{n-2k}{k} \right) ^{\ell}p^{3\ell}
\end{align*}
Note that $\left(\frac{1}{6}2^{n}\binom{n}{2k}\binom{2k}{k}\binom{n-2k}{k} \right)^{\ell}p^{3\ell}=(1+o(1))\left(\frac{c^3}{6\binom{2k}{k}(2k)! k!}\right)^{\ell}$. If we can show that 
$$\overline{D}_k=o(1),$$
then the proof is finished. 

Let us consider the family $\mathcal{G}_\ell(V,E)$ be the collection of all simple and undirected graphs with at least one edge, where the vertex set $V$ is $\ell$ distinct unordered triples $(x,y,z)$ such that $d_{xy}\leq 2k$, $d_{xz} \leq 2k$ and $d_{yz} \leq 2k$, and the edge set $E=\{(x_1,y_1,z_1)(x_2,y_2,z_2): \{x_1,y_1,z_1\} \cap \{x_2,y_2,z_2\} \neq \emptyset \}$. Note that there is a one-to-one correspondence between the set $\mathcal{G}_\ell(V,E)$ and the set of ordered $\ell$ tuples of unordered triples $(x,y,z)$ with $d_{xy}\leq 2k$, $d_{xz} \leq 2k$ and $d_{yz} \leq 2k$, which are not all pairwise disjoint. 

Now let us partition the graphs in this collection according to the number of components in the graph. Let $U_{\ell,i}$ denote a the collection of graphs with $i$ components and we have 
$$\mathcal{G}_{\ell}=\bigcup_{i=1}^{\ell-1} U_{\ell,i}. $$
Now let us consider a particular graph $G$ on the collection $U_{\ell,i}$. Since $G$ has $i$ components, it is not difficult to see that the vertex set of $G$ contains at least $3i+1$ distinct elements from $V(Q_{n,k})$. Moreover, it is easy to see that the number of of such ordered $\ell$ tuples of unordered triples $(x,y,z)$ with $d_{xy}\leq 2k$, $d_{xz} \leq 2k$ and $d_{yz} \leq 2k$ is at most $2^{ni}\binom{n}{2k}^{3l}$ Therefore, we have 
$$\overline{D}_\ell\leq \sum_{i=1}^{\ell-1}\sum_{U_{\ell,i} \in \mathcal{G}_{\ell}}p^{3i+1}2^{ni}n^{6kl} \leq \ell 2^{\binom{\ell}{2}}p^{3i+1}2^{ni}n^{6kl} =o(1),$$
which is the desired result. \qedhere
\end{proof}

Lemma~\ref{lemmaV2} gives a sufficient condition for bounding the critical probability, and one would want to extend it to all $r$. 
Unfortunately, this extension fails, as will be shown next.
\begin{lemma}
Let $r\geq 4$ and $rk\le n$. There exists a subset $\cA_0=\{x_1,x_2,\dots ,x_r\}$ of $r$ vertices with all pairwise distances $\le 2k$ such that the $r$-neighbor percolation process that starts with $\cA_0$ fails to infect the entire graph $Q_{n,k}$.
\end{lemma}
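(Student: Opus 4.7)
The plan is to exhibit $\cA_0$ explicitly. Set $x_1 := 0^n$, and for $2 \le i \le r$ let $x_i := \mathbbm{1}_{B_{i-1}}$, where $B_j := \{(j-1)k+1, \dots, jk\}$ for $j = 1, \dots, r-1$. Since $rk \le n$, the blocks $B_1, \dots, B_{r-1}$ are pairwise disjoint subsets of $[n]$, so the vertices $x_1, \dots, x_r$ are distinct. Their pairwise distances are $d(x_1, x_i) = k$ for $i \ge 2$ and $d(x_i, x_j) = 2k$ for $2 \le i < j \le r$, all of which are at most $2k$, as required.

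I would then show that no vertex $v \notin \cA_0$ can ever be infected, so the process halts with $\cA_0 \subsetneq V$ (for instance $1^n$ is never reached). Because $|\cA_0| = r$, such a $v$ would need every one of $x_1, \dots, x_r$ as a neighbor. First, $d(v, x_1) \le k$ forces $|v| \le k$, where $|v|$ denotes Hamming weight. Next, for $2 \le i \le r$, the identity
\[
d(v, x_i) \;=\; |v \triangle B_{i-1}| \;=\; |v| + k - 2|v \cap B_{i-1}|
\]
shows that $d(v, x_i) \le k$ is equivalent to $|v \cap B_{i-1}| \ge |v|/2$. Summing over $i = 2, \dots, r$ and using disjointness of the blocks yields
\[
|v| \;\ge\; \sum_{j=1}^{r-1} |v \cap B_j| \;\ge\; \frac{(r-1)|v|}{2},
\]
which rearranges to $|v|(r-3) \le 0$. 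For $r \ge 4$ this forces $|v| = 0$, i.e., $v = 0^n = x_1 \in \cA_0$, a contradiction.

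The main (and essentially only) nontrivial step is discovering the construction itself: the counting argument above shows that $0^n$ is the unique candidate for a common neighbor of the $r-1$ disjoint block-indicator vertices, so by placing $0^n$ itself into $\cA_0$ one closes off precisely the single escape route that the $r$-process could exploit. Everything else is a one-line averaging computation.
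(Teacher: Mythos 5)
Your proposal is correct and uses essentially the same construction and argument as the paper: the $r-1$ disjoint block-indicator vectors together with $0^n$, followed by the averaging computation that forces a common neighbor to have weight zero. If anything, your bookkeeping via $|v| \ge \sum_j |v \cap B_j|$ is slightly tidier than the paper's, since it transparently accounts for any weight of $v$ lying outside the blocks.
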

\begin{proof} We will show that the set $\cA_0=\{x_1,x_2,\dots ,x_{r-1},x_r=0^n\}$ with $x_i=0^{(i-1)k}1^k0^{n-ik},i=1,\dots,r-1$ and $d(x_i,x_j)\le 2k$ for all $1\le i,j\le r$ satisfies the claim of the lemma. We claim that 
    $$
    \bigcap_{i=1}^r N_{x_i}(Q_{n,k})=\emptyset.
    $$

Suppose the contrary and let $y \in \cap_{i=1}^r N(x_i)$ be a vertex common to all the neighborhoods. Suppose that
the Hamming weight of the subvector $(y_{(j-1)k+1},\dots,y_{jk})$ is $a_j$, where $0\le a_j\le k$ and $j=1,\dots,r-1$.
We have $$0<\sum_{i=1}^{r-1}a_i \leq k$$
and for $j\in\{1,\dots,r-1\},$
   $$
   \sum_{\substack{i=1 \\ i\neq j}}^{r-1}a_i+k-a_j \leq k.
   $$
Then we have 
$$(r-3)\left(\sum_{i=1}^{r-1}a_i \right) \leq 0,$$
which is a contradiction since it forces $\sum_{i=1}^{r-1} a_i \leq 0$.
\end{proof}

\section{Minimum percolating sets} \label{sec:MPS}
In this section, we will prove Theorem~\ref{thm:max-set} and Theorem~\ref{thm: min-set}. Let us first prove Theorem~\ref{thm:max-set}. 

\begin{proof}
Let us prove \eqref{eq:8-n}. Clearly, $m(r)\ge r$ because otherwise the initial set $\cA_0$ will never change. Repeating 
the proof of Lemma~\ref{lemma:subcube}, we claim that if the initially infected set includes the subcube 
$Q_{r-1,k}^0$, then the $r$-neighbor process will infect the entire graph. Again, in the first step,  infection
spreads to every vertex $v$ in the subcube $\{\ast^{r-1}10^{n-r}\}$ since $|N_{v}\cap Q_{r-1,k}^0|=r$, and induction completes the argument.

Thus, if $Q_{r-1,k}^0\subset \cA_0$, then percolation occurs. Denote by $V$ the set of vertices of $Q_{r-1,k}^0$ and let us write
$V=\cup_{j=0}^{r-1} V_j$, where $V_j$ is the set of vertices of Hamming weight $j=0,\dots,r-1$ in $Q_{r-1,k}^0$.
We claim that if  $(V_{m_r-1}\cup V_{m_r})\subset \cA_0$, then the process propagates to the entire subcube $Q_{r-1,k}^0$. 
Indeed, every vertex of weight $m_r-i$ is adjacent to $\binom{r-1-m_r+i}{2}$ vertices of weight $m_r-i+2$. By a direct check,
if $r\ge 10$ then
\begin{align}
   \binom{r-1-m_r+i}{2}\geq r. \label{equation MPS1}
\end{align}

Moreover, every vertex of weight $m_r+i$ is adjacent to $\binom{m_r+i}{2}$ vertices of weight $m_r+i-2$. We have 
\begin{align}
\binom{m_r+i}{2} \geq r. \label{equation MPS2}
\end{align}

Once every vertex in $V_{m_{r}-1} \cup V_{m_{r}}$ is initially infected, then in the next step every vertex on $V_{m_{r}-2}$ and $V_{m_{r}+1}$ will be infected. After that the infection will go on due to the monotonicity in $i$ on the left side of  (\ref{equation MPS1}) and (\ref{equation MPS2}). Therefore, we have $ m(r) \leq \binom{r-1}{m_r}+\binom{r-1}{m_r-1}$,  which shows \eqref{eq:8-n}. 

Now let us prove (\ref{eq:>n}), which is very similar to that for (\ref{eq:8-n}). First let us write $V(Q_{n,2})=\cup_{j=0}^nV_j(Q_{n,2})$, where $V_j(Q_{n,2})$ is the set of vertices of Hamming weight $j=0,\dots,n$ in $V(Q_{n,2})$. We claim that if $(V_{m_r} \cup V_{m_r+1}) \subset \cA_0$, the process infects the entire graph $Q_{n,2}$. Indeed every vertex of weight $m_r-i$ is adjacent to $\binom{n-(m_r-i)}{2}$ vertices of weight $m_r-i+2$. As long as $r \leq \frac{n^2}{8}$, 
\begin{align}
\binom{n-m_r+i}{2} \geq r, \label{eq: n-m_r+i}
\end{align}

Moreover, every vertex of weight $m_r+i$ is adjacent to $\binom{m_r+i}{2}$ vertices of weight $m_r+i-2$. We have 
\begin{align}
\binom{m_r+i}{2} \geq r. \label{eq: m_r+i}
\end{align}

Once every vertex in $V_{m_{r}} \cup V_{m_{r}+1}$ is initially infected, then in the next step every vertex on $V_{m_{r}-1}$ and $V_{m_{r}+2}$ will be infected. After that the infection will go on due to the monotonicity in $i$ on the left side of (\ref{eq: n-m_r+i}) and (\ref{eq: m_r+i}). Therefore, we have $r \leq m(r) \leq \binom{n}{m_r}+\binom{n}{m_r+1}$,  which shows \eqref{eq:8-n}. 

Now let us prove (\ref{eq: r=cn}). Again, if $Q_{r-1,k}^0\subset \cA_0$, then percolation occurs. Denote by $V$ the set of vertices of $Q_{r-1,k}^0$ and let us write
$V=\cup_{j=0}^{r-1} V_j$, where $V_j$ is the set of vertices of Hamming weight $j=0,\dots,r-1$ in $Q_{r-1,k}^0$. We claim that if $\cup_{j=0}^{k-1}V_{m_{k,r}+j} \subset \cA_0$, then every vertex in $Q_{r_1,k}^0$ will be infected by the process. Indeed, every vertex of weight $m_{k,r}-i$ is adjacent to $\binom{r-1-(m_{k,r}-i)}{k}$ vertices of weight $m_{k,r}-i+k$. 
If $r \geq 2^{\frac{k^2}{k-1}}$, by using the $\binom{n}{m} \geq \left(\frac{n}{m} \right)^m$, we have 
\begin{align}
\binom{r-1-(m_{r,k}-i)}{k} \geq r. \label{eq: r-1 k,r}
\end{align}

Moreover, every vertex of weight $m_{k,r}+i$ is adjacent to $\binom{m_{k,r}+i}{k}$. We have for $i \geq k$
\begin{align}
\binom{m_{k,r}+i}{k} \geq r.   \label{eq: k r}
\end{align}

Once every vertex in $\cup_{j=0}^{k-1} V_{m_{k,r}+j}$ is initially infected, then in the next step every vertex on $V_{m_{k,r}-1}$ and $V_{m_{k,r}+k}$ will be infected. After that the infection will go on due to the monotonicity in $i$ on the left side of (\ref{eq: r-1 k,r}) and (\ref{eq: k r}). Therefore, we have $ m(r) \leq \sum_{j=0}^{k-1} \binom{r-1}{m_{k,r}+j}$,  which shows \eqref{eq: r=cn}. Similarly, \eqref{eq: r less} can be proved.  

Before proceeding, we will need the following lemma. 
\begin{lemma} \label{lemma3}
 Suppose that $k\ge 2, n\ge r\ge 2,$ and $2 \leq r \leq 2k$. Then $$ m(r)=r.$$ 
\end{lemma} 
\begin{proof}
By Lemma~\ref{lemma:subcube}, it suffices to exhibit a set $\cA_0$ of size $r$ that implies percolation. First 
suppose that $r-1 \leq k$. Let $\cA_0=\{x_1,\dots ,x_r\}$, where $x_1=0^n$, $x_2=1^{r-1}0^{n-r+1},$ and 
$x_3=1^{r-2}0^{n-r+2},$ $x_4=1^{r-3}010^{n-r+1}$, etc., $x_r=101^{r-3}0^{n-r+1}$.

Since $r-1 \leq k$ every binary sequences in $\{0,1\}^{r-1}$ are within Hamming distance $k$ from $x_1,\dots ,x_r$. Therefore, every vertex on this $r-1$ dimensional cube will be infected. 

     Now let us consider the case where $r-1 \geq k$. Again let us initially infect $x_1,\dots ,x_r$. It is easy to see that every binary sequence of length $r-1$ and of weight $k$ will be infected by the process since those sequences have $x_1,\dots ,x_r$ as their neighbors. Then every binary sequence of length $r-1$ and of weight $k+1$ will be infected by the process and every binary sequence of length $r-1$ and of weight up to $r-2$ will be infected by the process. Therefore, every vertex in this $r-1$ dimensional cube is infected by this process. 
\end{proof}

Returning to the proof of Theorem~\ref{thm:max-set}, we will now address \eqref{eq:2-6}, which has been mostly taken care
of by Lemma~\ref{lemma3}. What remains is to show that 
$m(5)=5,$
 and 
$m(6)=6.$
Let $x_i=1^{i-1}0^{n-i+1}, i=1,\dots,5$. We claim that if $\cA_0$ contains these 5 vectors, the process will percolate to the entire graph.
Indeed, observe that the first steps of the process propagate to the subcube $Q_{4,k}^0$, and after that Lemma~\ref{lemma:subcube}
implies the full claim $m(5)=5$.

Moving to the case $r=6$, consider the set 
   $$\cA_0=\{0^{n},01^20^{n-3},0^21^30^{n-5}, 1^40^{n-4}, 01^40^{n-5}, 0^21^40^{n-6}\}.$$
The proof is completed by observing that the first steps infect the subcube $Q_{6,k}^0$ and using Lemma~\ref{lemma:subcube} to prove
the full claim.

Next we address \eqref{eq:>15}. Let $V(Q_{n,k})=V_0\cup V_1\cup\dots\cup V_n$ be the set of vertices of the cube with $V_i$
denoting the subset of all vertices of Hamming weight $i$, for all $i$. 
Without loss of generality assume that $0^n \in \cA_1$, then the $r$ initially infected vertices in $\cA_0$ should be of weights $1$ to $2$. 
Let $v\in V_3$, then $|N_v\cap V_2|=3$ and $|N_v\cap V_1|=3.$ Similarly, if $w\in V_4$, then $|N_w\cap V_2|=6$. Thus even if
$\cup_{i=0}^2V_i\subset \cA_0$, percolation does not occur.

It remains to prove \eqref{eq:>3}. As before, let $V_i, i=0,\dots,n$ be the subset of vertices of weight $i$. We claim that for
the $r$-process to percolate it suffices that $|V_1\cap\cA_0|=r$. Indeed, since the vertices in $V_0$ and $V_1$ form a clique, 
the process clearly spreads to every vertex in $V_0\cup V_1$. Next, since a vertex in $V_2$ is connected to all of $V_0\cup V_1$, 
so the process further spreads to $V_2$. Continuing in this way, every vertex in $V_i$ 
has at least $\binom{i}{i-1}+\binom{i}{i-2}(n-i)$ neighbors in $V_{i-1}$ since $k \geq 3$. This suffices to claim percolation. \end{proof}

In order to prove Theorem~\ref{thm: min-set}, we will need the following lemma. 

\begin{lemma} \label{lemma: lower bound}
 Suppose that $k\geq 2$ and $n$ is sufficiently large. If $\sum_{i=1}^k\binom{n'}{i} \leq cr^{\ell}$ and $\sum_{i=1}^{k-1}\sum_{j=1}^{k-i} \binom{n'}{j}\binom{n-n'}{i} \le r-cr^{\ell}$ with $0 < \ell \le 1$ and $0 <c <1$, then 
 $$m(r) \ge \delta2^{n'},$$
 where $\delta >0$. 
\end{lemma} 
\begin{proof}
 It is not difficult to see that percolation fails to occur from some initially infected set if and only if there exists a set $B$ of healthy vertices such that every vertex in $B$ has fewer than $r$ neighbors outside of $B$. Therefore our task is to show the existence of a such set $B$ with $|\cA_0|=\delta 2^{n'}$ regardless of the locations of the vertices in $\cA_0$.

    Consider a partition of $V(Q_{n,k})$ into $2^{n-n'}$ parts, i.e.,
    $$V(Q_{k,n})=\bigcup_{a \in \{0,1\}^{n-n'}}[\ast]^{n'}a,$$
    where $[\ast]^{n'}a$ denotes $\underbrace{\ast \ast \cdots \ast}_{n'} a$. 
    Let us denote the induced subgraphs $\{G_a\}_{a \in \{0,1\}^{n-n'}}$ with $V(G_{a})=[\ast]^{n'}a$  and $E(G_a)=\{xy: 1 \le d_{xy} \le k \; \text{and}\;  x, y \in V(G_a)\}$.
    
    Note that each induced subgraph $G_a$ is an $n^{'}$-dimensional subcube and each vertex of $G_a$ has $cr^{\ell}=\sum_{i=1}^k\binom{n'}{i}$ neighbors in $G_a$, where $ 0<c<1 $. We will argue that if $|\cA_0|= \delta 2^{n'}$ with $\delta >0$, then there exists a set $B$ of healthy vertices such that every vertex in that set has less than $r$ neighbors outside that set and thus percolation does not happen. 

    Since $|\cA_0|=\delta 2^{n'}$, there is a set $B_a$ of healthy vertices of size $(1-\delta)2^{n'}$ in $G_a$ for each $a \in \{0,1\}^{n-n'}$. Additionally, the vertices in the sets $(B_a)_{a \in {\{0,1\}^{n-n'}}}$ satisfy the following condition: if $x_1x_2\cdots x_{n'}a$ is in $B_a$ then $x_1x_2\cdots x_{n'}a'$ is in $B_{a'}$ for all $a,a' \in \{0,1\}^{n-n'}$. Therefore, $B=\cup_{a \in \{0,1\}^{n-n'}} B_a$. 

    Now let us count the number of possibly infected neighbors not in $V(G_a)$ for a vertex in $B_a$. W.l.o.g, we can assume that $a=[0]^{n-n'}$. Let us denote the number of possibly infected neighbors not in $V(G_a)$ for a vertex $v$ in $B_{[0]^{n-n'}}$ by $N(v)$. It is not difficult to see that 
    \begin{align}
    N(v) \le \sum_{i=1}^{k-1}\sum_{j=1}^{k-i} \binom{n'}{j}\binom{n-n'}{i}. \label{eq: N(v)}
    \end{align}
    Therefore, as long as $cr^{\ell}+N(v) \leq r$, we can conclude $m(r) \ge \delta 2^{n'}$ where $\delta>0$. \qedhere  
\end{proof}

Now we are ready to prove Theorem~\ref{thm: min-set} and its proof is a direction calculation from Lemma~\ref{lemma: lower bound}. 

\begin{proof}
Let us show \eqref{eq:k=2}. For $k=2$, we have $\binom{n'}{2}+n' \leq cr^l$ and thus $n' \leq \sqrt{2cr^{\ell}}$. From \eqref{eq: N(v)}, we have $(n-n')n' \leq (1-c)r$ and thus $n \leq \sqrt{2cr^{\ell}}+\frac{(1-c)r^{1-\ell/2}}{\sqrt{2c}} \leq \frac{2(1-c)r^{1-\ell/2}}{\sqrt{2c}}$  since $1-\frac{\ell}{2}\geq \frac{\ell}{2}$ and $n$ is sufficiently large. We have the desired result.

\eqref{eq: k=2,l=1} can be proved very similarly. 

Now let us prove \eqref{eq: k>=3,l<1}. For $k \geq 3$, we have $\sum_{i=1}^k\binom{n'}{i} \leq cr^{\ell}$ and thus $n' \leq r^{\ell/k} \frac{k}{e} \left(\frac{c}{2}\right)^{1/k} $. From \eqref{eq: N(v)}, we have 
$\sum_{i=1}^{k-1}\sum_{j=1}^{k-i} \binom{n'}{j}\binom{n-n'}{i} \leq (1-c)r$. It suffices to have $2\binom{n-n'}{k-1}n' \leq (1-c)r$ since $n$ is sufficiently large. 

\eqref{eq: k>=3} can be proved similarly. \qedhere

\end{proof}

\section{Discussion and open problems}
We conclude with several problems left open in our research.

\begin{enumerate}
\item What is $m(Q_{n,k},r)$ when $k\geq 2$ and $r=r(n)$ is large ? 

\noindent We conjecture that if $r=r(n)$ is a relatively fast growing function of $n$, then $m(Q_{n,k},r) \geq \delta 2^{n'}$ where $n'$ satisfies that $\sum_{i=1}^k\binom{n'}{i}\leq cr$ with $0<c <1$ and $\delta >0$. Note that if this holds, then we obtain an exponentially tight bound for $m(Q_{n,k},r)$ over a wide range values of $r$, whereas Theorem ~\ref{thm: min-set} gives such a bound only when $r=\theta(n^k)$. 

In order to explain our conjecture it is necessary to introduce the notion of the weak saturation introduced by Bollob{\'a}s \cite{Bollobas}. Given fixed graphs
$G$ and $H$, a spanning subgraph $F$ of $G$ is weakly ($G$, $H$)-saturated if the edges
of $E(G) \backslash E(F)$ can be added to $F$, one edge at a time, in such a way that each edge
creates a new copy of $H$ when it is added. The weak saturation number of $H$ in $G$ is defined by 
    $$
    \text{wsat}(G,H):= \text{min} \{|E(F)|: F \: \text{is weakly} \: (G,H)\text{-saturated}\}.
    $$
Our conjecture is based on the following bound:
\begin{align}
 m(G,r) \geq \frac{\text{wsat}(G,S_{r+1})}{r},  \label{bound} 
\end{align}
where $S_{r+1}$ denotes a star with $r+1$ rays. 

In \cite{MORRISON201861}, Morrison and Noel used bound \eqref{bound} to determine tight asymptotics for $m(G,r)$, where $G$ is the $n$-dimensional Boolean cube and $r$ is a fixed constant. In their proof they relied on a linear-algebraic method developed in \cite{Balough2012} to obtain an asymptotically tight lower bound for wsat($G$,$S_{r+1}$). We conjecture that 
    $\text{wsat}(Q_{n,k},S_{r+1}) \geq \delta r 2^{n'}.$
To support this conjecture, consider a partitioning of $Q_{n,k}$ into $2^{n-n'}$ isomorphic parts so that each induced subgraph is an $n'$-dimensional subcube. Each vertex of an $n'$-dimensional subcube has $cr=\sum_{i=1}^k\binom{n'}{i}$ neighbors, and the total number of edges in the subcube is $2^{n'-1}cr$. We believe that the lower bound $\text{wsat}(Q_{n,k},S_{r+1}) \geq \delta 2^{n'}$ can be proved relying on a version of the linear algebraic method.

\item What is $p_c(G,r)$ for a fixed $k$ and $r$?

Although Lemma \ref{lemmaV2} does not hold when $r \geq 4$, it might still be possible to derive asymptotics of the critical probability using alternative characterizations of percolating sets. 

\end{enumerate}

\section*{Acknowledgment}
The author would like to thank Prof.~Alexander Barg for suggesting this problem, providing valuable guidance, and offering meticulous proofreading. Additionally, the author extends thanks to Wen-Tai Hsu, Yihan Zhang, and Shaoyang Zhou for useful discussions. 

\bibliographystyle{plain}
\bibliography{main.bib}

\begin{thebibliography}{10}

\bibitem{Aizenman_1988}
Michael Aizenman and Joel~L. Lebowitz.
\newblock Metastability effects in bootstrap percolation.
\newblock {\em Journal of Physics A: Mathematical and General}, 21(19):3801, Oct 1988.

\bibitem{balogh_bollobas_2006}
J{\'o}zsef Balogh and B{\'e}la Bollob{\'a}s.
\newblock Bootstrap percolation on the hypercube.
\newblock {\em Probability Theory and Related Fields}, 134:624--648, 2006.

\bibitem{Balogh}
J{\'o}zsef Balogh, B{\'e}la Bollob{\'a}s, Hugo Duminil-Copin, and Robert Morris.
\newblock The sharp threshold for bootstrap percolation in all dimensions.
\newblock {\em Transactions of the American Mathematical Society}, 364, 10 2010.

\bibitem{Balogh2}
J{\'o}zsef Balogh, B{\'e}la Bollob{\'a}s, and Robert Morris.
\newblock Bootstrap percolation in three dimensions.
\newblock {\em The Annals of Probability}, 37(4):1329--1380, July 2009.

\bibitem{balogh_bollobas_morris_2009}
J{\'o}zsef Balogh, B{\'e}la Bollob{\'a}s, and Robert Morris.
\newblock Majority bootstrap percolation on the hypercube.
\newblock {\em Combinatorics, Probability and Computing}, 18(1-2):17--51, 2009.

\bibitem{balogh_bollobas_morris_2010}
J{\'o}zsef Balogh, B{\'e}la Bollob{\'a}s, and Robert Morris.
\newblock Bootstrap percolation in high dimensions.
\newblock {\em Combinatorics, Probability and Computing}, 19(5-6):643--692, 2010.

\bibitem{Balough2012}
J{\'o}zsef Balogh, B{\'e}la Bollob{\'a}s, Robert Morris, and Oliver Riordan.
\newblock Linear algebra and bootstrap percolation.
\newblock {\em Journal of Combinatorial Theory, Series A}, 119:1328--1335, 2012.

\bibitem{barg2022high}
Alexander Barg and Gilles Z{\'e}mor.
\newblock High-rate storage codes on triangle-free graphs.
\newblock {\em IEEE Transactions on Information Theory}, 68(12):7787--7797, 2022.

\bibitem{Fabricio}
Fabricio Benevides, Jean-Claude Bermond, Hicham Lesfari, and Nicolas Nisse.
\newblock Minimum lethal sets in grids and tori under 3-neighbour bootstrap percolation.
\newblock {\em European Journal of Combinatorics}, 119, 2024.

\bibitem{Fabricio2}
Fabricio Benevides and Micha{\l} Przykucki.
\newblock On slowly percolating sets of minimal size in bootstrap percolation.
\newblock {\em The Electronic Journal of Combinatorics}, 20(P46), 06 2013.

\bibitem{Benevides}
Fabricio Benevides and Micha{\l} Przykucki.
\newblock Maximum percolation time in two-dimensional bootstrap percolation.
\newblock {\em SIAM Journal on Discrete Mathematics}, 29(1):224--251, 2015.

\bibitem{Bollobas}
B{\'e}la Bollob{\'a}s.
\newblock Weakly k-saturated graphs.
\newblock In {\em Beiträge zur Graphentheorie (Kolloquium, Manebach,1967)}, pages 25--31, Leipzig, 1968. Teubner.

\bibitem{Bollobas2}
B{\'e}la Bollob{\'a}s.
\newblock Random graphs.
\newblock In {\em Proceedings of the Eighth British Combinatorial Conference}, pages 80--102, Swansea, 1981. Cambridge University Press.

\bibitem{CERF200269}
Rapha{\"e}l Cerf and Francesco Manzo.
\newblock The threshold regime of finite volume bootstrap percolation.
\newblock {\em Stochastic Processes and their Applications}, 101(1):69--82, 2002.

\bibitem{Cirillo}
Rapha{\"e}l Cerf and Emilio N.~M.~Cirillo.
\newblock Finite size scaling in three-dimensional bootstrap percolation.
\newblock {\em The Annals of Probability}, 27(4):1837--1850, 1999.

\bibitem{Chalupa_1979}
John Chalupa, Paul L.Leath, and Gary R.Reich.
\newblock Bootstrap percolation on a {B}ethe lattice.
\newblock {\em Journal of Physics C: Solid State Physics}, 12(1):L31, Jan 1979.

\bibitem{amin}
Amin Coja-Oghlan, Uriel Feige, Michael Krivelevich, and Daniel Reichman.
\newblock {\em Contagious sets in expanders}, pages 1953--1987.
\newblock Proceedings of the 2015 Annual ACM-SIAM Symposium on Discrete Algorithms (SODA).

\bibitem{dukes2023extremalboundsthreeneighbourbootstrap}
Peter Dukes, Jonathan Noel, and Abel Romer.
\newblock Extremal bounds for 3-neighbor bootstrap percolation in dimensions two and three.
\newblock {\em SIAM Journal on Discrete Mathematics}, 37(3):2088--2125, 2023.

\bibitem{FKG}
Cees~M. Fortuin, Pieter~W. Kasteleyn, and Jean Ginibre.
\newblock Correlation inequalities on some partially ordered sets.
\newblock {\em Communications in Mathematical Physics}, 22(2):89--103, 1971.

\bibitem{FREUND201866}
Daniel Freund, Matthias Poloczek, and Daniel Reichman.
\newblock Contagious sets in dense graphs.
\newblock {\em European Journal of Combinatorics}, 68:66--78, 2018.

\bibitem{Random_graphs_Frieze2}
Alan Frieze and Micha{\l} Karo{\'n}ski.
\newblock {\em Introduction to Random Graphs}.
\newblock 2026.
\newblock Oniline Version: https://www.math.cmu.edu/~af1p/BOOK.pdf.

\bibitem{Ivailo}
Ivailo Hartarsky.
\newblock Maximal bootstrap percolation time on hypercube via generalized snake-in-the-box.
\newblock {\em The Electronic Journal of Combinatorics}, 25(3), 07 2018.

\bibitem{Holroyd2002SharpMT}
Alexander~E. Holroyd.
\newblock Sharp metastability threshold for two-dimensional bootstrap percolation.
\newblock {\em Probability Theory and Related Fields}, 125:195--224, 2002.

\bibitem{holroyd}
Alexander~E. Holroyd, Thomas M.~Liggett, and Dan Romik.
\newblock Integrals, partitions, and cellular automata.
\newblock {\em Transactions of the American Mathematical Society}, 356(8):3349--3368, 2004.

\bibitem{Random_graphs}
Svante Janson, Tomasz {\L}uczak, and Andrzej Ruci{\'n}ski.
\newblock {\em Random Graphs}.
\newblock Wiley-Interscience Series in Discrete Mathematics and Optimization. John Wiley and Sons, Inc., New York, 2000.

\bibitem{Karonski}
Micha{\l} Karo{\'n}ski and Andrzej Ruci{\'n}ski.
\newblock On the number of strictly balanced subgraphs of a random graph.
\newblock In {\em Graph Theory: Proceedings of a Conference, Lagow, Poland}, pages 79--83, Berlin, 1983. Springer.

\bibitem{MORRISON201861}
Natasha Morrison and Jonathan A.~Noel.
\newblock Extremal bounds for bootstrap percolation in the hypercube.
\newblock {\em Journal of Combinatorial Theory, Series A}, 156:61--84, 2018.

\bibitem{Michal}
Micha{\l} Przykucki.
\newblock Maximal percolation time in hypercubes under 2-bootstrap percolation.
\newblock {\em The Electronic Journal of Combinatorics}, 19(P41), 06 2012.

\end{thebibliography}
\end{document}